\numberwithin{equation}{section}
\theoremstyle{plain} 
\newtheorem{theorem}{Theorem}[section]
\newtheorem{prop}[theorem]{Proposition}
\theoremstyle{definition}
\newtheorem{definition}[theorem]{Definition}
\newtheorem{thmx}{Theorem}
\newtheorem{step}{Step}
\theoremstyle{remark}
\newcommand{\PP}{\mathbb{P}}
\newcommand{\Z}{\mathbb{Z}}
\newcommand{\C}{\mathbb{C}}
\newcommand{\HG}{{\widehat{G}}}
\newcommand{\id}{\mathrm{id}}
\newcommand{\barW}{{\overline{W}}}
\newcommand{\tildeW}{{\widetilde{W}}}
\newcommand{\Dup}{{D^{\uparrow}}}
\newcommand{\Ddown}{{D^{\downarrow}}}
\newcommand{\Fix}{{\textrm{Fix}}}
\begin{document}
\title{Twisted Koszul algebras of nonisolated hypersurface singularities}

\author{Sangwook Lee}
\address{Sangwook Lee: Department of Mathematics and Integrative Institute of Basic Science \\Soongsil University \\
369 Sangdo-ro, Dongjak-gu, Seoul, Korea}
\email{sangwook@ssu.ac.kr}
\begin{abstract}
Given a hypersurface singularity (not necessarily isolated) with a finite abelian group action, we develop a method to define an explicit product structure on the twisted Koszul algebra (whose invariant subalgebra is the orbifold Koszul algebra).
\end{abstract}
\maketitle

\section{Introduction}
Hypersurface singularities seem elementary but are still quite interesting objects due to abundant structures behind them. They have been vigorously studied in various areas such as topology, algebraic geometry, symplectic geometry and representation theory etc. 

We can generalize usual hypersurface singularity slightly, by considering a finite group action by $G$ on the affine space together with a polynomial $W$, which is invariant under the action. Such a pair $(W,G)$ is called an {\em orbifold Landau-Ginzburg model}. It is interesting in its own right, and it also arises naturally in mirror symmetry in the following manner. Suppose that a symplectic manifold $X$ has a LG mirror $W: \Bbbk^n \to \Bbbk$. If $\tilde{X}$ is a normal covering space of $X$ such that its deck transformation group $G$ is finite abelian, then it is expected that a mirror of $\tilde{X}$ is given by the orbifold LG model $(W,\HG)$. See \cite{CLe,HJL} for example.

We will focus on the computation of Hochschild type invariants among several approaches toward understanding singularities of orbifold LG models. In this aspect, isolated singularities have been investigated more than nonisolated ones so far. When the singularity is isolated, we come up with so-called {\em orbifold Jacobian algebra} as desired Hochschild invariant. There are works on orbifold Jacobian algebras as \cite{CalTu,BTW,HLL,ShkLGorb}. On the other hand, when the orbifold singularity is nonisolated its Hochschild invariant (namely orbifold Koszul algebra) is less understood. Nevertheless, nonisolated orbifold singularities also naturally arise when we study mirror symmetry of noncompact manifolds, e.g. \cite{HJL}. In such a case, the orbifold Koszul algebra is expected to be isomorphic to the symplectic cohomology of given noncompact manifold. Since symplectic cohomology is quite difficult to compute in general, it is desirable to know the structure of orbifold Koszul algebras as explicit as possible.

In this work, we investigate orbifold Koszul algebras in terms of matrix factorizations. The original reason for the author to use matrix factorizations (other than curved algebras as in previous works) is as follows: in earlier works \cite{CHL,CHL-toric} Cho-Hong-Lau discovered symplectic manifolds with prescribed Lagrangian submanifolds so that we can find their mirrors and prove (homological) mirror symmetry in geometric way. In this context we are naturally led to the category of matrix factorizations of the mirror orbifold LG model, and Hochschild invariants of the category of MFs are encoded in terms of Floer theory. (The relation between Floer theory and category of MFs was crucially used also in \cite{HJL}.)

Now we briefly recall the setting and main problem.
\begin{definition}\label{def:orbLG}
    Let $W\in \Bbbk[x_1,\cdots,x_n]$, and a finite group $G$ act on $\Bbbk[x_1,\cdots,x_n]$ such that $W$ is invariant under $G$-action. Then $(W,G)$ is called an {\em orbifold Landau-Ginzburg model}. 

    If an orbifold LG model $(W,G)$ further satisfies that 
    \begin{itemize}
        \item $G$ is abelian,
        \item for $h\in G$, $h\cdot x_i=h_ix_i$ for some $h_i\in \Bbbk^*$,
    \end{itemize}
    then we call $(W,G)$ a {\em diagonal orbifold LG model}.
\end{definition}
In this paper we will only deal with diagonal orbifold LG models. Now we want to define an analogue of Chen-Ruan cohomology for a diagonal orbifold LG model.
\begin{equation}\label{eq:sumofKoszul}
K^\bullet(W,G):= \bigoplus_{g\in G} K^\bullet(\partial W^g)\cdot \xi_g.
\end{equation}
Here, $K^*(\partial W^g)$ is the Koszul complex of the Jacobian ideal of the function $W|_{\Fix(g)}$, and $\xi_g$ is a formal generator which will be precisely defined later. It is rather easy to compute the module structure of $K^\bullet(W,G)$, but it is never clear what is the product on it. That is why we are led to consider Hochschild cohomology which has a natural associative algebra structure. We can consider a bimodule over a (curved) algebra or a category of matrix factorizations, whose Hochschild cohomology is isomorphic to $K^\bullet(W,G)$ as a module. As we mentioned above, we will use matrix factorizations. The following is the starting point:
\begin{thmx}
Let $(W,G)$ be a diagonal orbifold LG model. Let $\Delta_1$ and $\Delta_h$ be Koszul matrix factorizations of the diagonal ideal and the $h$-twisted diagonal ideal respectively (for $h\in G$), of a polynomial 
\begin{equation}\label{eq:boxminus}W\boxminus W:=W(y_1,\cdots,y_n)-W(x_1,\cdots,x_n)\in \Bbbk[x_1,\cdots,x_n,y_1,\cdots,y_n].
\end{equation}
Then we have a quasi-isomorphism of chain complexes
    \[(-)_{kos}:\bigoplus_{h\in G}hom_{MF(W\boxminus W)}(\Delta_1,\Delta_h)\stackrel{\sim}{\longrightarrow} K^\bullet(W,G).\]
\end{thmx}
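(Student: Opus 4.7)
The plan is to reduce the computation of $\hom_{MF(W\boxminus W)}(\Delta_1,\Delta_h)$ to a one-variable calculation via the tensor-product structure of Koszul matrix factorizations, and then identify the resulting complex with the $h$-summand $K^\bullet(\partial W^h)\cdot\xi_h$ of $K^\bullet(W,G)$.

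First I would fix explicit presentations of both Koszul MFs. Using the telescoping identity $W(y)-W(x)=\sum_i(y_i-x_i)\,q_i^1(x,y)$ with $q_i^1|_{y=x}=\partial_iW$, one can write $\Delta_1=\bigotimes_{i=1}^n\Delta_1^{(i)}$, where $\Delta_1^{(i)}$ is the two-term $\mathbb{Z}/2$-graded factorization with structure maps $y_i-x_i$ and $q_i^1$. Because $W$ is $G$-invariant, one has $W(y)-W(x)=W(y)-W(hx)$, whence an analogous factorization $W(y)-W(x)=\sum_i(y_i-h_ix_i)\,q_i^h(x,y)$ and a tensor decomposition $\Delta_h=\bigotimes_i\Delta_h^{(i)}$.

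Next, I would exploit the fact that $\hom_{MF}$ of a tensor product of Koszul factors is the tensor product of the individual hom complexes, and analyze $\hom(\Delta_1^{(i)},\Delta_h^{(i)})$ one variable at a time. When $h_i=1$, the two factors coincide and the $i$-th hom complex is homotopy equivalent to a rank-two free module over $\Bbbk[x_i]$, contributing a polynomial generator and one exterior generator $e_i$. When $h_i\neq 1$, the pair $(y_i-x_i,\,y_i-h_ix_i)$ is a regular sequence with common vanishing locus $\{x_i=y_i=0\}$, so a standard Koszul-acyclicity argument collapses the $i$-th hom complex to $\Bbbk$ placed in the appropriate $\mathbb{Z}/2$-parity, corresponding to the structure ring of the $h$-fixed locus in that coordinate.

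Assembling across $i$, the total hom complex becomes quasi-isomorphic to $\Bbbk[\Fix(h)]\otimes\bigwedge^\bullet V_h$, where $V_h$ is spanned by those $e_i$ with $h_i=1$. Direct inspection of the inherited differential, together with the identity $q_i^1|_{y=x}=\partial_iW$, shows that on this reduced complex the differential is contraction with $\partial W^h$, i.e.\ precisely the Koszul differential on $K^\bullet(\partial W^h)$. The map $(-)_{kos}$ is then the composition of the tensor decomposition, restriction to $\{y=hx\}\cap\Fix(h)$, and projection onto the exterior piece; summing over $h\in G$ and labelling each summand by $\xi_h$ recovers $K^\bullet(W,G)$.

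The main obstacle will be this last step: verifying that the induced differential matches the Koszul differential for $\partial W^h$ with correct signs in the twisted sectors, and realizing the quasi-isomorphism as a genuine chain map rather than a zig-zag. This requires a judicious choice of the representatives $q_i^h$ so that their restrictions to $\Fix(h)$ agree with $\partial_iW^h$ in the fixed directions and vanish compatibly in the twisted ones; moreover the tensor decompositions must be assembled uniformly in $h$ so that the direct-sum map $(-)_{kos}$ is well-defined on the full hom complex at once.
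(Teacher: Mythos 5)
Your overall picture (twisted directions collapse to the fixed locus, untwisted directions contribute exterior generators with residual differential given by contraction against $\partial W^h$) is the right one, and it is what the spectral sequence in the paper's argument ultimately produces. But the route you propose has a genuine gap: the factor-by-factor computation is not legitimate as stated. Writing $\Delta_1=\bigotimes_i\Delta_1^{(i)}$ and $\Delta_h=\bigotimes_i\Delta_h^{(i)}$, the $i$-th factors are rank-one Koszul factorizations of the \emph{different} one-variable potentials $(y_i-x_i)q_i^1$ and $(y_i-h_ix_i)q_i^h$. Consequently the operator $D_i\phi=d_{\Delta_h^{(i)}}\phi-(-1)^{|\phi|}\phi\, d_{\Delta_1^{(i)}}$ on $\hom(\Delta_1^{(i)},\Delta_h^{(i)})$ squares to multiplication by $(y_i-h_ix_i)q_i^h-(y_i-x_i)q_i^1$, which is nonzero for each $i$ (only the sum over $i$ vanishes). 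So the ``individual hom complexes'' are curved modules, not complexes, and statements like ``the $i$-th hom complex is homotopy equivalent to a rank-two free module'' or ``collapses to $\Bbbk$'' have no meaning one variable at a time. Relatedly, when $h_i=1$ the factors $\Delta_1^{(i)}$ and $\Delta_h^{(i)}$ do \emph{not} coincide in general, since $q_i^h=\nabla_iW(h\cdot x,y)$ depends on all components of $h$, not only on $h_i$.

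The repair is essentially the paper's proof (which is quoted from \cite{HJL}): one first uses that $\Delta_h$ resolves the MCM module $S/(y_1-hx_1,\cdots,y_n-hx_n)[-n]$ over $S/(W\boxminus W)$, so that
\[
\hom_{MF(W\boxminus W)}(\Delta_1,\Delta_h)\;\simeq\;\Delta_1^\vee[-n]\otimes_S S/(y_1-hx_1,\cdots,y_n-hx_n).
\]
After this replacement the curvature is gone and one has an honest double complex over $S/(y-hx)\cong R$, whose two differentials are built from $(x_i-h_ix_i)$ and from $\nabla_iW(x,hx)$; a filtration by the number of $\theta$'s and the standard spectral sequence then implements your ``twisted directions are a regular sequence cutting out $\Fix(h)$, untwisted directions give $K^\bullet(\partial W^h)$'' argument rigorously, all at once rather than variable by variable. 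If you want to keep your tensor decomposition, you must at least organize it into a filtration of the \emph{total} hom complex and argue on the associated graded, where the curvature terms drop out; as written, the step ``hom of a tensor product is the tensor product of the individual hom complexes, now compute each factor'' would fail.
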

It is known that $\bigoplus_{h\in G}hom_{MF(W\boxminus W)}(\Delta_1,\Delta_h)$ is isomorphic to an endomorphism algebra of a $(1\times G)$-equivariant matrix factorization of $W\boxminus W$, so we can define an intrinsic product structure on $K^\bullet(W,G)$ via the quasi-isomorphism. The remaining problem is therefore to construct a quasi-inverse of $(-)_{kos}$, and it is the main advance of this work.
\begin{thmx}
Let $W\in R=\Bbbk[x_1,\cdots,x_n]$, $S=\Bbbk[x_1,\cdots,x_n,y_1,\cdots,y_n]$ and $(W,G)$ be a diagonal orbifold LG model. For $i,j\in \{1,\cdots,n\}$ and $h\in G$, there exist polynomials $g_{ji}^h\in S$ and $f_{ji}^h \in R$ such that for
\begin{align*}
 \eta_h: S[\theta_1,\cdots,\theta_n,\partial_1,\cdots,\partial_n] & \to S[\theta_1,\cdots,\theta_n,\partial_1,\cdots,\partial_n],\nonumber \\
\theta_I \partial_J& \mapsto  \sum (-1)^{|I|} g^h_{ji} \frac{\partial\theta_I}{\partial\theta_i}\partial_j\partial_J+\sum f^h_{ji}\frac{\partial^2 \theta_I}{\partial\theta_j\partial\theta_i}\partial_J,
\end{align*}
the map $\exp(\eta_h)$ is a quasi-inverse of $(-)_{kos}$.
\end{thmx}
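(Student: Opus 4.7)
The approach is to view both $hom_{MF(W\boxminus W)}(\Delta_1,\Delta_h)$ and $K^\bullet(\partial W^h)$ as appropriate (sub)quotients of the common polynomial superalgebra $S[\theta_1,\ldots,\theta_n,\partial_1,\ldots,\partial_n]$, and to realize $\exp(\eta_h)$ as an explicit chain-level lift between them. Fixing a cofactor decomposition $W(y)-W(x)=\sum_i (y_i-h_i^{-1}x_i)\,b_i^h$ with $b_i^h\in S$, the Koszul matrix factorization $\Delta_h$ is built from the regular sequence $\{y_i-h_i^{-1}x_i\}$, and the morphism complex is free over $S$ on the basis $\{\theta_I\partial_J\}$ with its usual supercommutator differential $\delta_h$. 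The map $(-)_{kos}$ of Theorem~A then amounts to restriction to the $h$-twisted diagonal $y_i=h_i^{-1}x_i$ followed by extraction of the Koszul representative.

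The plan is to determine $g^h_{ji}\in S$ and $f^h_{ji}\in R$ by imposing that $\exp(\eta_h)$ intertwine the Koszul differential $d^h_{\Kos}$ on $K^\bullet(\partial W^h)$ with $\delta_h$. Since each monomial in $\eta_h$ strictly decreases $|I|$ (by one in the $g$-term and by two in the $f$-term), the series $\exp(\eta_h)=\sum_{k\ge 0}\eta_h^k/k!$ terminates on every element, and $\exp(-\eta_h)$ is a two-sided algebraic inverse, so no convergence issue arises. Expanding the chain-map identity $\delta_h\circ\exp(\eta_h)=\exp(\eta_h)\circ d^h_{\Kos}$ and matching coefficients of $\theta_I\partial_J$ yields a system of linear equations in $g^h_{ji},f^h_{ji}$, organized by the bidegree $(|I|,|J|)$.

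I would solve this system inductively on $|I|$. The lowest nontrivial equation (arising from single $\theta$-contractions) asks that the difference between $b_j^h$ and its restriction to the twisted diagonal admit an expression
\begin{equation*}
b_j^h(x,y)-b_j^h(x,h^{-1}\cdot x)=\sum_i (y_i-h_i^{-1}x_i)\,g^h_{ji}(x,y),
\end{equation*}
which is solved by a standard Taylor-remainder argument along the twisted diagonal, defining the $g^h_{ji}$. The next batch of equations (double $\theta$-contractions) contains mixed $x$- and $y$-dependent terms, but after substituting the $g^h_{ji}$ and exploiting the $G$-invariance of $W$ (which forces a symmetry among second partial derivatives of the $b_i^h$), the $y$-dependent contributions cancel, and the residual $x$-only obstruction can be absorbed into $f^h_{ji}\in R$. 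Higher-order equations coming from $\eta_h^k$ with $k\ge 2$ should reduce to formal consequences of these two batches, because $\eta_h$ strictly lowers the $|I|$-filtration.

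The main obstacle I anticipate is confirming this last assertion — that no new obstructions arise at orders $k\ge 2$ of $\exp(\eta_h)$ — which requires careful sign and combinatorial bookkeeping of the factors $(-1)^{|I|}$ and of the reorderings of $\partial_j\partial_J$ and $\frac{\partial^2}{\partial\theta_j\partial\theta_i}$ under iteration; a conceptual reformulation in terms of a BV-type bracket on $S[\theta,\partial]$ may make the cancellations transparent. Once existence is established, the quasi-inverse property is cheap: $\exp(\eta_h)$ is upper unitriangular in the $|I|$-filtration, so $(-)_{kos}\circ\exp(\eta_h)$ is the identity modulo strictly smaller $|I|$, hence a quasi-isomorphism, and therefore $\exp(\eta_h)$ is a quasi-inverse of $(-)_{kos}$ by Theorem~A.
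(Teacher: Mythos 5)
There are two genuine gaps here, and they sit exactly where the real work of the theorem lies.

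First, the heart of the statement is the \emph{existence} of $g^h_{ji}$ and $f^h_{ji}$, and your proposal defers precisely that. You reduce existence to a linear system coming from the intertwining identity $\delta_h\circ\exp(\eta_h)=\exp(\eta_h)\circ d^h_{\Kos}$, solve the first batch of equations by a Taylor-remainder argument along the twisted diagonal (this part is fine and essentially produces the paper's $g^h_{ji}$), but then only \emph{assert} that the second batch is solvable with $f^h_{ji}\in R$ and that no obstructions arise at orders $k\ge 2$ --- and you flag this yourself as the main unresolved obstacle. In the paper this is the bulk of the proof: $g^h_{ji}$ and $f^h_{ji}$ are written down explicitly as iterated difference quotients of the interpolating polynomials $\barW^h_{j,i}$ and $\tildeW^h_{j,i}$, and the entire verification is made to hinge on the two telescoping identities \eqref{eq:nabladiff} and \eqref{eq:nablahx}, i.e.\ $\sum_{i\ge j}(x_i-hx_i)g^h_{ji}=\nabla_jW-\nabla_jW(hx,y)$ and $\sum_{j\le i}(y_j-x_j)g^h_{ji}+\sum_{j>i}(x_j-hx_j)f^h_{ij}-\sum_{k<i}(x_k-hx_k)f^h_{ki}=\nabla_iW(hx,y)-\partial_iW^h$. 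Note also that the paper does not prove (nor need) that $\exp(\eta_h)$ is a chain map: it only shows that Koszul \emph{cocycles} go to closed morphisms, and the final cancellation uses the cocycle condition $\sum_{i\in I^h}\partial_iW^h\,a_I\,\partial(\theta_I\theta_{I_h})/\partial\theta_i=0$ in an essential way. Your stronger requirement that $\exp(\eta_h)$ intertwine the two differentials on the nose may overdetermine the system, so even the formulation of your linear problem needs care.

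Second, the closing step is not ``cheap'' and, as written, does not prove the theorem. Unitriangularity of $(-)_{kos}\circ\exp(\eta_h)$ in the $|I|$-filtration shows that this composite is a quasi-isomorphism, but the off-diagonal corrections $\phi_{kos}^{-s+2k}$ for $k\ge 1$ land in \emph{different cohomological degrees} of the Koszul complex; a filtered map that is the identity on the associated graded induces a unipotent automorphism of cohomology, not necessarily the identity. And knowing that both $(-)_{kos}$ and the composite are quasi-isomorphisms only yields that $\exp(\eta_h)$ is a quasi-isomorphism, not that it is a quasi-\emph{inverse} of $(-)_{kos}$. One must actually prove that each correction $\phi_{kos}^{-s+2k}$, $k\ge 1$, is a Koszul coboundary. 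The paper does this with a dedicated homotopy argument: $\phi^{-s+2k}$ and $\big(\exp(\eta_h)(\phi_{kos}^{-s+2k})\big)^{-s+2k}$ are homotopic closed morphisms, and restricting the homotopy relation to the summand $S\subset S\langle\theta_1,\cdots,\theta_n\rangle$ and then to $\Fix(h)$ kills the $(y_i-hx_i)\theta_i$ and $(y_i-x_i)\theta_i$ contributions and exhibits $\phi_{kos}^{-s+2k}$ in the image of $\sum_{i\in I^h}\partial_{x_i}W^h\cdot\partial_i$. This step is essential and is missing from your argument.
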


Finally, we will illustrate an example which is relevant in mirror symmetry. This will not only be helpful in understanding the recipe, but also reflect an interesting geometric feature of our result, in the sense that we can observe a similarity between an orbifold Koszul algebra and a symplectic cohomology ring by looking at the explicit algebra structure.

\begin{center}
    {\bf Acknowledgements}
\end{center}

This work grew out of the idea from author's joint work with Hansol Hong and Hyeongjun Jin and he would like to thank them for useful discussions and brilliant insights during the collaboration. This work was supported by the National Research Foundation of Korea (NRF) grant funded by the Korea government(MSIT)(No. 202117221032) and Basic Science Research Program through NRF funded by the Ministry of Education (2021R1A6A1A10044154).

\section{Preliminaries}


Let $\theta_i$ and $\partial_i$ (for $i=1,\cdots,n$) be formal variables with $|\theta_i|=-1$, $|\partial_i|=1$ and
\begin{equation}\label{eq:cliffrel}
 \theta_i \theta_j=-\theta_j \theta_i,\;\; \partial_{i}\partial_{j}=-\partial_{j}\partial_{i},\;\; \partial_{i}\theta_j=-\theta_j\partial_{i}+\delta_{ij}
 \end{equation}
Let $I=\{i_1,\cdots,i_k\}$ be an ordered subset of $\{1,\cdots,n\}$. We will frequently use following multi-index notations:
\[ \theta_I:=\theta_{i_1}\cdots\theta_{i_k}, \quad \partial_I:=\partial_{i_1}\cdots\partial_{i_k}.\]
We recall an elementary definition in terms of above variables.
\begin{definition}
Let $A$ be a commutative ring. For a sequence $(r_1,\cdots,r_n)$ of elements in $A$, 
\[ K^\bullet(r_1,\cdots,r_n):=(A\langle \theta_1,\cdots,\theta_n \rangle, \sum_i r_i \partial_i)\]
is called the {\em Koszul complex} of $(r_1,\cdots,r_n)$.
\end{definition}
The cochain complex $K^\bullet(r_1,\cdots,r_n)$ is explicitly spelled out as 
\[\displaystyle \xymatrix{
 0 \ar[r] & A\cdot \theta_1\cdots\theta_n \ar[r]^-{\sum_i r_i\partial_i} 
 & \bigoplus\limits_{i_1<\cdots<i_{n-1}} A \cdot \theta_{i_1}\cdots\theta_{i_{n-1}} \ar[r] & \cdots \ar[r] & \bigoplus\limits_i A\cdot \theta_i \ar[r]^-{\sum_i r_i\partial_i} & A \ar[r] & 0} \]
with $\bigoplus\limits_{i_1<\cdots<i_m}A\cdot \theta_{i_1}\cdots\theta_{i_m}= K^{-m}(r_1,\cdots,r_n)$.
Koszul complexes can be used to define invariants of singularities as follows.
\begin{definition}
    Let $(W,G)$ be a diagonal orbifold LG model. For $h\in G$ define
    \[ x_i^h:=\begin{cases}
    x_i & {\textrm{ if }} h\cdot x_i=x_i\\
    0 & {\textrm{ if }} h\cdot x_i \neq x_i
    \end{cases},
    \quad W^h:=W(x_1^h,\cdots,x_n^h)\in \Bbbk[x_1^h,\cdots,x_n^h].
    \]
    Let \[ I_h:= \{i\mid h\cdot x_i \neq x_i\},\quad  I^h:=I_h^c\]
    be ordered subsets of $\{1,\cdots,n\}$ and define a sequence $ \partial W^h:=\big(\frac{\partial W^h}{\partial x_i}:i\in I^h \big)$ in $\Bbbk[x_1^h,\cdots,x_n^h]$. The {\em twisted Koszul complex} of $(W,G)$ is defined as
    \[ K^\bullet(W,G):= \bigoplus_{h\in G} K^\bullet(\partial W^h)\cdot \theta_{I_h}.\]
    Defining $G$-action on variables $\{\theta_1,\cdots,\theta_n,\partial_1,\cdots,\partial_n\}$ by
 \[ h\cdot\theta_i:=h_i^{-1} \theta_i, \quad h\cdot\partial_{i}:=h_i \partial_{i}\]
when $h\cdot x_i=h_i x_i$, we call 
\[ K^\bullet(W,G)^G=\bigoplus_{h\in G}\big( K^\bullet(\partial W^h)\cdot \theta_{I_h} \big)^G\]
the {\em orbifold Koszul complex} of $(W,G)$, where $\big( K^\bullet(\partial W^h)\cdot \theta_{I_h} \big)^G$ is the subcomplex of $G$-invariants of $K^\bullet(\partial W^h)\cdot \theta_{I_h}$.
\end{definition}
Observe that as each $h$-summand we did not just take the Koszul complex of Jacobian ideal of $W^h$, but shifted it by $\theta_{I_h}$. Such a modification will appear meaningful later.

We recall another important invariant of $W$.
\begin{definition}
A {\em matrix factorization} of $W \in R$ is a $\Z/2$-graded projective $R$-module $P=P_0\oplus P_1$ together with a morphism $d=(d_0,d_1)$ of degree 1 such that
\[ d^2=W\cdot \id.\] 
Let $\phi: (P,d) \to (Q,d')$ be a morphism of degree $j\in \Z/2$.  Define
\[ D\phi:=d'\circ \phi -(-1)^{|\phi|}\phi \circ d\]
and define the composition of  morphisms in the usual sense. This defines a dg-category of matrix factorizations $(MF(W),D,\circ)$.
\end{definition}



\begin{definition}
Let $(W,G)$ be a diagonal orbifold LG model and $S=\Bbbk[x_1,\cdots,x_n,y_1,\cdots,y_n]$. Let \[ \nabla_j W:= \frac{W(y_1,\cdots,y_j,x_{j+1},\cdots,x_n)-W(y_1,\cdots,y_{j-1},x_j,\cdots,x_n)}{y_j-x_j},\]
and let
\[ \Delta_h:=\Big( S\langle\theta_1,\cdots,\theta_n\rangle,\sum_{i=1}^n \big((y_i-hx_i)\theta_i+\nabla_i W(h\cdot x_1,\cdots,h\cdot x_n,y_1,\cdots,y_n) \cdot\partial_{i}\big)\Big)\]
be a matrix factorization of $W\boxminus W$ (in \eqref{eq:boxminus}). Here $S\langle \theta_1,\cdots,\theta_n\rangle$ is $\Z/2$-graded, by
division into even/odd degrees.
\end{definition}

Now we introduce the first key step towards our main result.

\begin{prop}\label{prop:mfkos}{\em (\cite[Proposition 2.6]{HJL})}
We have a quasi-isomorphism of chain complexes
\[ \bigoplus_{h\in G} hom_{MF(W\boxminus W)}(\Delta_1,\Delta_h) \simeq K^\bullet(W,G).\]
\end{prop}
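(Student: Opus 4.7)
The plan is to identify the hom complex as a Clifford-module over $S$ and then apply a Koszul-style filtration that reduces the computation to the fixed locus $\Fix(h)$, where the Koszul complex of $\partial W^h$ emerges naturally from the remaining ``Jacobian'' part of the differential.

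First, since both $\Delta_1$ and $\Delta_h$ share the underlying free $S$-module $S\langle\theta_1,\ldots,\theta_n\rangle$, any $S$-linear map $\phi\colon\Delta_1\to\Delta_h$ is uniquely encoded by an element of the Clifford-type algebra $S\langle\theta_1,\ldots,\theta_n,\partial_1,\ldots,\partial_n\rangle$ with the relations \eqref{eq:cliffrel}, where $\partial_i$ acts by contraction. Under this identification composition is Clifford multiplication and the hom differential reads
\[ D\phi \;=\; d_h\cdot \phi \;-\; (-1)^{|\phi|}\phi\cdot d_1 .\]
Inserting the explicit formulas for $d_h$ and $d_1$, the operator $D$ splits into a ``Koszul part'' built from the sequences $\{y_i-x_i\}$ and $\{y_i-h_ix_i\}$ (multiplying $\theta_i$) and a ``Jacobian part'' built from the divided differences $\nabla_i W(hx,y)$ and $\nabla_i W(x,y)$ (multiplying $\partial_i$).

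Next I filter $hom_S(\Delta_1,\Delta_h)$ by powers of the $S$-ideal generated by $\{y_i-x_i,\,(1-h_i)x_i\}_{i=1}^n$ and run the associated spectral sequence. On the $E_0$-page the differential collapses to the Koszul differential for this combined sequence. For $i\in I^h$ (so $h_i=1$) only $y_i-x_i$ contributes, contracting $y_i$ against $x_i$; for $i\in I_h$ the pair $\{y_i-x_i,\,(1-h_i)x_i\}$ is regular (the scalar $1-h_i$ being a unit in $\Bbbk$) and forces $x_i=y_i=0$. By Koszul acyclicity the $E_1$-page is supported on the diagonal copy of $\Fix(h)$.

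Finally, on this reduced locus the surviving Jacobian part of $D$ becomes, in directions $i\in I^h$, precisely the Koszul differential $\sum_{i\in I^h}(\partial W^h/\partial x_i)\,\partial_i$, because $\nabla_i W(hx,y)$ and $\nabla_i W(x,y)$ both specialize to $\partial W^h/\partial x_i$ after imposing $y=x$ and $x_j=0$ for $j\in I_h$. In directions $i\in I_h$ the Jacobian coefficients vanish on $\Fix(h)$ and the only Clifford datum that survives is the top wedge factor $\theta_{I_h}$, giving exactly the shift in the definition of $K^\bullet(W,G)$. Summing over $h\in G$ yields the required quasi-isomorphism. The main obstacle I anticipate is the bookkeeping: verifying convergence of the filtration spectral sequence and carefully tracking Clifford signs so that the induced map agrees with the candidate $(-)_{kos}$ — in particular, confirming that the $\theta_{I_h}$-shift is produced with the correct sign dictated by the conventions in \eqref{eq:cliffrel}.
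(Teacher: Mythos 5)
Your overall architecture (identify the hom complex with a Clifford module, split $D$ into a linear part and a Jacobian part, run a spectral sequence whose first page localizes to the diagonal copy of $\Fix(h)$) is the right shape, and it differs from the paper's route: the paper first invokes the MF--MCM correspondence to replace $\Delta_h$ by the module $S/(y_1-h x_1,\ldots,y_n-h x_n)[-n]$ it resolves, as in \eqref{homcomplex}, and only then runs a spectral sequence on the resulting double complex $\Delta_1^\vee[-n]\otimes_S S/(y-hx)$. That first reduction is not cosmetic: it cuts the rank from $4^n$ to $2^n$ over $S$, leaves exactly one exterior generator per coordinate direction, and thereby sidesteps the two problems below. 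As written, your direct attack has two genuine gaps.

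First, the filtration is wrong. Filtering by powers of the $S$-ideal $(y_i-x_i,\,(1-h_i)x_i)$ is a decreasing $I$-adic filtration, and the part of $D$ that \emph{raises} $I$-adic degree is precisely the Koszul part (it multiplies by elements of the ideal). Hence on $E_0$ that part dies and you are left with the Jacobian part as the $E_0$-differential --- the opposite of what you assert. The filtration you actually need is by Clifford (exterior) degree, i.e.\ by the total number of $\theta$'s and $\partial$'s; this is bounded between $0$ and $2n$, so convergence is automatic, and with respect to it the linear part is the associated-graded differential. Second, ``Koszul acyclicity'' fails exactly where you need content to survive. In the Clifford picture there are \emph{two} exterior generators per index $i$ (left multiplication by $\theta_i$ from $d_h$ and right multiplication by $\theta_i$ from $d_1$), so the $E_0$-differential is the Koszul complex of a $2n$-term sequence. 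For $i\in I_h$ the pair $(y_i-h_ix_i,\,y_i-x_i)$ is regular and those directions do collapse; but for $i\in I^h$ both entries equal $y_i-x_i$, and $H^\bullet(K(f,f))\cong S/(f)\oplus S/(f)[1]$, so one exterior class per $i\in I^h$ survives to $E_1$. This is not a defect to be explained away: those surviving classes \emph{are} the generators $\theta_i$, $i\in I^h$, of $K^\bullet(\partial W^h)$, and without them your next sentence --- that the Jacobian part acts as $\sum_{i\in I^h}\partial_{x_i}W^h\cdot\partial_i$ --- has nothing to act on. As stated, your $E_1$-page (``supported on the diagonal copy of $\Fix(h)$'', with everything collapsed) and your $E_1$-differential are incompatible. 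Repairing both points recovers a valid proof, but you should also note that after these corrections one still has to rule out higher differentials $d_r$, $r\ge 2$, in a merely $\Z/2$-graded setting; this is handled by the degree bookkeeping that the paper delegates to \cite[Proposition 2.6]{HJL}.
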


We will need the following part of the proof of Proposition \ref{prop:mfkos}. Since $\Delta_h$ is constructed by the resolution of a shifted MCM module $S/(y_1-hx_1,\cdots,y_n-hx_n)[-n]$ over the hypersurface ring $S/(W\boxminus W)$, we have quasi-isomorphisms
\begin{align}\label{homcomplex}
\begin{split}
hom_{MF(W\boxminus W)}(\Delta_1,\Delta_h) & \simeq hom_S\big(\Delta_1,S/(y_1-hx_1,\cdots,y_n-hx_n)[-n] \big) \\
&\simeq hom_S\big(\Delta_1[n],S/(y_1-hx_1,\cdots,y_n-hx_n) \big) \\
&\simeq \Delta_1^\vee[-n] \otimes_S \big(S/(y_1-hx_1,\cdots,y_n-hx_n)\big). 
\end{split}
\end{align}
The latter chain complex in \eqref{homcomplex} is a double complex, and the proof involves typical spectral sequence argument. 

We can spell out a quasi-isomorphism explicitly as follows.
Let $\phi \in hom_{MF(W(y)-W(x))}(\Delta_1,\Delta_h)$ be a closed morphism. By \eqref{homcomplex} its cohomology class $[\phi]$ is completely determined by $pr\circ \phi$, where $pr: \Delta_h \to S\cdot \theta_1\cdots\theta_n$ is the projection map. We have a unique expression
\begin{equation}\label{eq:prphi}
pr\circ \phi=\sum_{J,K}b_{JK}\theta_J\theta_K\partial_{K^{op}} 
\end{equation}
such that $J$ and $K$ are ordered subsets of $\{1,\cdots,n\}$, $b_{JK}\in S$, and $\partial_{K^{op}}:=\partial_{k_m}\cdots\partial_{k_1}$ whereas $\theta_K=\theta_{k_1}\cdots\theta_{k_m}$. We point out that $K$ may be empty. We also define
\[ (pr\circ \phi)_+:= \sum_{J: I_h \subset J}\sum_K b_{JK}\theta_J\theta_K\partial_{K^{op}}.\]
For an $S$-module homomorphism 
$\phi=\sum f_{IJ}(y_1,\cdots,y_n,x_1,\cdots,x_n) \theta_I \partial_J$ we define
\begin{equation}\label{eq:restrFix}
 \phi|_{{\textrm{Fix}}(h)}:=\sum f_{IJ}(x_1^h,\cdots,x_n^h,x_1^h,\cdots,x_n^h)\theta_I\partial_J.
 \end{equation}
With respect to the expression \eqref{eq:prphi} and notation \eqref{eq:restrFix}, the quasi-isomorphism 
\[ (-)_{kos}: hom(\Delta_1,\Delta_h) \to K^\bullet(\partial_{x_i} W^h: i\in I^h)\cdot \theta_{I_h}\]
can be realized as
\[\phi_{kos}= \big(\sum_{J: I_h \subset J}\sum_K b_{JK}\theta_J\big)|_{\Fix(h)}. \]
Observe that $\phi_{kos}$ is determined only by $(pr\circ \phi)_+$. 

\section{Classification of closed morphisms of matrix factorizations}\label{sec:classification}

This section is devoted to the construction of a quasi-inverse of $(-)_{kos}$.
First, for $0 \leq j<i \leq n$ and $h\in G$, define
\[ \barW^h_{j,i}:=W(y_1,\cdots,y_{j},x_{j+1},\cdots,x_i,hx_{i+1},\cdots,hx_n)\in S,\]
\[ \tildeW^h_{j,i}:=W(x_1^h,\cdots,x_{j}^h,x_{j+1},\cdots,x_{i},hx_{i+1},\cdots,hx_n)\in R.\]
(Recall that $x_i^h=x_i$ if $hx_i=x_i$ and $x_i^h=0$ otherwise.) We also define
\begin{equation}\label{eq:Wbar} \barW^h_{i,i}:=W(y_1,\cdots,y_{i},hx_{i+1},\cdots,hx_n),\end{equation}
\begin{equation}\label{eq:Wtilde} \tildeW^h_{i,i}:=W(x_1^h,\cdots,x_i^h,hx_{i+1},\cdots,hx_n).\end{equation}
For $i,j\in \{1,\cdots,n\}$ with $j<i$, define
\begin{align}\label{eq:gjifji}
\begin{split}
g^h_{ji}&:=\begin{cases}
\frac{(\barW^h_{j,i}-\barW^h_{j-1,i})-(\barW^h_{j,i-1}-\barW^h_{j-1,i-1})}{(y_j-x_j) (x_i-hx_i)} &{\rm if\;} i\in I_h, \\
\frac{\partial_i \barW^h_{j,i}-\partial_i \barW^h_{j-1,i}}{y_j-x_j} & {\rm if \;} i\in I^h, 
\end{cases} \\
 f^h_{ji}&:=
 \begin{cases}\frac{(\tildeW^h_{j,i}-\tildeW^h_{j-1,i})-(\tildeW^h_{j,i-1}-\tildeW^h_{j-1,i-1})}{(x_j-hx_j)(x_i-hx_i)}& {\rm if\;} i,j\in I_h,\\
 \frac{\partial_i \tildeW^h_{j,i}-\partial_i \tildeW^h_{j-1,i}}{x_j-hx_j} & {\rm if\;} j\in I_h{\rm \; and\;} i\in I^h,\\
 \frac{(\partial_j \tildeW^h_{j,i}-\partial_i \tildeW^h_{j,i-1})- ({\partial_j \tildeW^h_{i-1,i-1}-\partial_j \tildeW^h_{i,i}})}{x_i-hx_i}& {\rm if\;} j\in I^h {\rm \; and\;} \; i\in I_h,\\
 0 & {\rm otherwise}
 \end{cases}
 \end{split}
 \end{align}
 and
 \begin{align}\label{eq:gii}
 \begin{split}
      g^h_{ii}&:=
 \begin{cases}\frac{1}{y_i-x_i}\cdot\Big(\frac{\barW^h_{i,i}-\barW^h_{i-1,i-1}}{y_i-hx_i}-\frac{\barW^h_{i-1,i}-\barW^h_{i-1,i-1}}{x_i-hx_i}\Big) & {\rm if\;} i\in I_h, \\
 \frac{1}{y_i-x_i}\cdot\Big(\frac{\barW^h_{i,i}-\barW^h_{i-1,i-1}}{y_i-hx_i}-\partial_i \barW^h_{i-1,i} \Big) & {\rm if\;} i\in I^h.
 \end{cases}
 \end{split}\end{align}
Observe that $g^h_{ji}=0$ if $j>i$, and $f^h_{ji}=0$ if $j \geq i$. Also, if $i\in I^h$ then $\frac{\barW^h_{i,i}-\barW^h_{i-1,i-1}}{y_i-hx_i}-\partial_i \barW^h_{i-1,i}$ is indeed divisible by $y_i-x_i$ because it vanishes after we evaluate $y_i=x_i$, so $g_{ii}^h$ is also well-defined.

For $S=k[x_1,\cdots,x_n,y_1,\cdots,y_n]$, define an $S$-linear map
\begin{align}
 \eta_h: S[\theta_1,\cdots,\theta_n,\partial_1,\cdots,\partial_n] & \to S[\theta_1,\cdots,\theta_n,\partial_1,\cdots,\partial_n],\nonumber \\
\theta_I \partial_J& \mapsto  \sum (-1)^{|I|} g^h_{ji} \frac{\partial\theta_I}{\partial\theta_i}\partial_j\partial_J+\sum f^h_{ji}\frac{\partial^2 \theta_I}{\partial\theta_j\partial\theta_i}\partial_J. \label{etasign}
\end{align}
We also define a map
\begin{equation}\label{eq:exp} 
\exp(\eta_h):=1+\eta_h+\frac{\eta_h^2}{2!}+\cdots\;\;:S[\theta_1,\cdots,\theta_n,\partial_1,\cdots,\partial_n]  \to S[\theta_1,\cdots,\theta_n,\partial_1,\cdots,\partial_n].
\end{equation}
\eqref{eq:exp} is a finite sum due to graded commutativity of $\theta$ and $\partial$, hence well-defined.

\begin{theorem}\label{thm:cocycle}
Let $h\in G$, and let $\displaystyle\sum_{|I|=r} a_I \theta_I\theta_{I_h}$ be a cocycle of the Koszul complex of $(\partial_i W^h : i\in I^h)$ where $a_I \in k[x_1^h,\cdots,x_n^h]$ is identified as an element of $S$ by natural inclusion. Then 
$\exp(\eta_h)\big(\sum a_I \theta_I\theta_{I_h}\big)$ is a closed element in $hom_{MF(W(y)-W(x))}(\Delta_1,\Delta_h)$.
\end{theorem}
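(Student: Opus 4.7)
The plan is to verify the cocycle condition $D\phi = d_h \circ \phi - (-1)^{|\phi|}\phi\circ d_1 = 0$ directly for $\phi = \exp(\eta_h)(\omega)$ with $\omega = \sum a_I \theta_I \theta_{I_h}$ a Koszul cocycle. I would first view $\phi$, $d_1$, and $d_h$ as elements of the Clifford-type algebra $S\langle\theta_1,\ldots,\theta_n,\partial_1,\ldots,\partial_n\rangle$ governed by the relations \eqref{eq:cliffrel}; then $D\phi = 0$ becomes an equality inside this algebra, which I would organize by expanding $\exp(\eta_h) = \sum_{k\ge 0}\eta_h^k/k!$ and matching contributions of each $(\theta,\partial)$-bidegree.

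The heart of the argument is a family of telescoping identities satisfied by the polynomials $g^h_{ji}$, $g^h_{ii}$, $f^h_{ji}$. Since each of them is a (second) divided difference of $\barW^h$ or $\tildeW^h$ in an appropriate pair of variables, sums such as $\sum_{j\le i}(y_j-x_j)g^h_{ji}$ telescope to expressions of the form $\nabla_i W(hx,y) - \nabla_i W(x,y)$ modulo factors of $x_i - hx_i$, with analogous identities on the $\tildeW^h$ side for the $f^h_{ji}$. Using these I would establish the intertwining relation
\[ d_h\exp(\eta_h)(\omega) - (-1)^{|\omega|}\exp(\eta_h)(\omega)\,d_1 \;=\; \exp(\eta_h)\bigl(\delta_{\Kos}\,\omega\bigr), \]
where $\delta_{\Kos} = \sum_{i\in I^h} \partial_i W^h\,\partial_i$ is the Koszul differential on $K^\bullet(\partial W^h)\cdot \theta_{I_h}$. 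The cocycle hypothesis $\delta_{\Kos}\,\omega = 0$ then immediately gives $D\phi = 0$. The intertwining itself I would prove by induction on $k$: at each stage the excess between $d_h\,\eta_h^k(\omega)$ and $\eta_h^k(\omega)\,d_1$ is, by the telescoping identities, precisely what is needed to match the contribution $\eta_h^{k+1}(\omega)/(k+1)!$ at the next order, with the factorials matching the combinatorics of iterated contractions. The hypothesis $a_I \in \Bbbk[x_1^h,\ldots,x_n^h]$ is used at the end to kill the residual terms carrying a factor of $x_i - hx_i$ for $i\in I_h$: such a factor multiplies an $a_I$ depending only on fixed coordinates and so drops out.

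The main obstacle is the combinatorial and sign bookkeeping. The operator $\eta_h$ is not a derivation but a sum of two different contractions — one replacing a single $\theta_i$ by $g^h_{ji}\partial_j$, the other contracting a pair $\theta_j\theta_i$ to $f^h_{ji}$ — so $\eta_h^k(\omega)$ expands as a signed sum over ordered partial matchings of the $\theta$-indices of $\omega$. I would first prove a normal-form lemma giving such an expansion, after which the closedness verification reduces to a combinatorial identity matched against the telescoping formulas. Managing the $(-1)^{|I|}$ signs, the anticommutativity implicit in \eqref{eq:cliffrel}, and the separate cases $i \in I_h$ vs.\ $i \in I^h$ in \eqref{eq:gjifji}--\eqref{eq:gii} will be the main practical difficulty.
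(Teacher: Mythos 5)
Your proposal follows essentially the same route as the paper's proof: expand $\exp(\eta_h)$ order by order as a signed sum over partial matchings of the $\theta$-indices, prove the telescoping divided-difference identities for $g^h_{ji}$ and $f^h_{ji}$ (these are exactly \eqref{eq:nabladiff} and \eqref{eq:nablahx} in the text), and observe that the residual term is the Koszul differential $\sum_{i\in I^h}\partial_i W^h\,\partial_i$ applied to the input, which vanishes by the cocycle hypothesis. Your intertwining relation $D(\exp(\eta_h)\omega)=\exp(\eta_h)(\delta_{\Kos}\,\omega)$ is a clean repackaging of the paper's pairing of $\Ddown\big(\eta_h^{k}/k!\big)$ with $\Dup\big(\eta_h^{k+1}/(k+1)!\big)$, so the two arguments coincide in substance.
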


\begin{proof}

Throughout the proof, we omit all upper indices $h$, i.e. 
\[g_{ji}=g^h_{ji},\;\; f_{ji}=f^h_{ji},\;\; \barW=\barW^h,\;\;\tildeW=\tildeW^h.\] 
Let $f\theta_I \partial_J\in hom_{MF(W(y)-W(x))}(\Delta_1,\Delta_h)$. For the differential $D$ of the chain complex $hom_{MF(W(y)-W(x))}(\Delta_1,\Delta_h)$, decompose $ D=\Dup+\Ddown$ as follows:
\begin{align*} \Dup(f\theta_I\partial_J):=&\sum (y_i-hx_i)\theta_i \cdot f\theta_I\partial_J+(-1)^{|I|+|J|+1}\sum f\theta_I\partial_J \cdot (y_i-x_i)\theta_i,\\
 \Ddown(f\theta_I\partial_J):=&\sum \nabla_i W(hx,y)\partial_i\cdot f\theta_I\partial_J + (-1)^{|I|+|J|+1} \sum f\theta_I\partial_J \cdot \nabla_i W(x,y) \partial_i.
 \end{align*}
 
\begin{step} 
We have
\begin{align*}
 & \Dup \big(\sum_I a_I\theta_I\theta_{I_h}\big) \\
 =& \sum_{1\leq i \leq n}\sum_I (y_i-hx_i)\theta_i\cdot a_I\theta_I\theta_{I_h}
 + \sum_{1\leq i \leq n}\sum_I (-1)^{|I|+|I_h|+1}a_I\theta_I\theta_{I_h} \cdot (y_i-x_i)\theta_i \\
 =& \sum_{i\in I^h}\sum_I (y_i-hx_i)\theta_i\cdot a_I\theta_I\theta_{I_h}
 + \sum_{i\in I^h}\sum_I (-1)^{|I|+|I_h|+1}a_I\theta_I\theta_{I_h} \cdot (y_i-x_i)\theta_i  \\
 =& \sum_{i\in I^h}\sum_I (y_i-x_i)\theta_i \cdot a_I \theta_I \theta_{I_h}- \sum_{i\in I^h}\sum_I (y_i-x_i)\theta_i \cdot a_I \theta_I \theta_{I_h} \\
 =& 0.
\end{align*}
\end{step}

\begin{step}\label{step:dupdown}
Now we spell out $ \Ddown\Big( \frac{\eta_h^k}{k!}\big( a_I \theta_I\theta_{I_h}\big)\Big)$ and $ \Dup\Big(\frac{\eta_h^{k+1}}{(k+1)!}\big( a_I \theta_I\theta_{I_h}\big)\Big)$ for each $I\subset I^h$.

By graded commutativity (see \cite{LeeTwJac} for more detail), we have
\begin{equation}\label{eq:kthpower}
 \frac{\eta_h^k}{k!}( a_I \theta_I\theta_{I_h})=\sum_{\stackrel{l+m=k}{1\leq i_\bullet,j_\bullet\leq n}}  (-1)^{\epsilon_l}g_{j_1 i_1}\cdots g_{j_l i_l} f_{i_{l+1} i_{l+2}} \cdots f_{i_{l+2m-1} i_{l+2m}}\cdot  a_I \frac{\partial^{l+2m}(\theta_{I}\theta_{I_h})}{\partial\theta_{i_1}\cdots\partial\theta_{i_{l+2m}}} \partial_{j_1}\cdots \partial_{j_l},
\end{equation}
with the sign given by 
$\epsilon_l:=l(|I|+|I_h|)-\frac{l(l-1)}{2}.$
The following identities will be frequently (and implicitly) used in the sequel:
\[ (-1)^{|I|+|I_h|+\epsilon_l+l}=(-1)^{\epsilon_{l+1}},\quad (-1)^{\epsilon_{l+2}}=(-1)^{\epsilon_l+1}.\]
We introduce a notation
\[ \Phi_{(j_1,i_1),\cdots,(j_l,i_l)}^{(i_{l+1},i_{l+2}),\cdots,(i_{l+2m-1},i_{l+2m})}
:=g_{j_1 i_1}\cdots g_{j_l i_l} f_{i_{l+1} i_{l+2}} \cdots f_{i_{l+2m-1} i_{l+2m}}.\]
%
Then we can write
\begin{align*}
& \Ddown\Big( \frac{\eta_h^k}{k!}\big( a_I \theta_I\theta_{I_h}\big)\Big)  \\
=& \sum_{\stackrel{1\leq i \leq n}{l+m=k,1\leq i_\bullet,j_\bullet\leq n}}\nabla_i W(hx,y)\partial_i\cdot (-1)^{\epsilon_l}\Phi_{(j_1,i_1),\cdots,(j_l,i_l)}^{(i_{l+1},i_{l+2}),\cdots,(i_{l+2m-1},i_{l+2m})}\cdot a_I \frac{\partial^{l+2m} (\theta_I\theta_{I_h})}{\partial\theta_{i_1}\cdots\partial\theta_{i_{l+2m}}} \partial_{j_1}\cdots \partial_{j_l} \nonumber \\
&+  \sum_{\stackrel{1\leq j \leq n}{l+m=k,1\leq i_\bullet,j_\bullet\leq n}}(-1)^{|I|+|I_h|+1+\epsilon_l}
\Phi_{(j_1,i_1),\cdots,(j_l,i_l)}^{(i_{l+1},i_{l+2}),\cdots,(i_{l+2m-1},i_{l+2m})}
\cdot a_I \frac{\partial^{l+2m} (\theta_I\theta_{I_h})}{\partial\theta_{i_1}\cdots\partial\theta_{i_{l+2m}}} \partial_{j_1}\cdots \partial_{j_l}\cdot(\nabla_j W) \partial_j \nonumber\\
& \nonumber \\
=& \sum_{\stackrel{1\leq j \leq n}{l+m=k,1\leq i_\bullet,j_\bullet\leq n}} (-1)^{|I|+|I_h|+\epsilon_l} (\nabla_j W(hx,y)-\nabla_j W)
\cdot \Phi_{(j_1,i_1),\cdots,(j_l,i_l)}^{(i_{l+1},i_{l+2}),\cdots,(i_{l+2m-1},i_{l+2m})}
\cdot a_I \frac{\partial^{l+2m} (\theta_I\theta_{I_h})}{\partial\theta_{i_1}\cdots\partial\theta_{i_{l+2m}}} \partial_{j_1}\cdots \partial_{j_l}\partial_j \nonumber \\
&+\sum_{\stackrel{1\leq i \leq n}{l+m=k,1\leq i_\bullet,j_\bullet\leq n}} (-1)^{\epsilon_l} \nabla_i W(hx,y)\cdot 
\Phi_{(j_1,i_1),\cdots,(j_l,i_l)}^{(i_{l+1},i_{l+2}),\cdots,(i_{l+2m-1},i_{l+2m})}
\cdot a_I\frac{\partial^{l+2m+1}(\theta_{I}\theta_{I_h})}{\partial\theta_i\partial\theta_{i_1}\cdots\partial\theta_{i_{l+2m}}} \partial_{j_1}\cdots \partial_{j_l} \nonumber \\
&\nonumber\\
=& \sum_{\stackrel{1\leq j \leq n}{l+m=k,1\leq i_\bullet,j_\bullet\leq n}} (-1)^{|I|+|I_h|+\epsilon_l+l} (\nabla_j W(hx,y)-\nabla_j W)
\cdot\Phi_{(j_1,i_1),\cdots,(j_l,i_l)}^{(i_{l+1},i_{l+2}),\cdots,(i_{l+2m-1},i_{l+2m})}
\cdot a_I \frac{\partial^{l+2m} (\theta_I\theta_{I_h})}{\partial\theta_{i_1}\cdots\partial\theta_{i_{l+2m}}} \partial_j\partial_{j_1}\cdots \partial_{j_l} \\
&+\sum_{\stackrel{1\leq i \leq n}{l+m=k,1\leq i_\bullet,j_\bullet\leq n}} (-1)^{\epsilon_l} \nabla_i W(hx,y)
\cdot \Phi_{(j_1,i_1),\cdots,(j_l,i_l)}^{(i_{l+1},i_{l+2}),\cdots,(i_{l+2m-1},i_{l+2m})}
\cdot a_I\frac{\partial^{l+2m+1}(\theta_{I}\theta_{I_h})}{\partial\theta_i\partial\theta_{i_1}\cdots\partial\theta_{i_{l+2m}}} \partial_{j_1}\cdots \partial_{j_l} \\ 
&\nonumber\\
=& \sum_{\stackrel{1\leq j \leq n}{l+m=k,1\leq i_\bullet,j_\bullet\leq n}} (-1)^{\epsilon_{l+1}} (\nabla_j W(hx,y)-\nabla_j W)
\cdot\Phi_{(j_1,i_1),\cdots,(j_l,i_l)}^{(i_{l+1},i_{l+2}),\cdots,(i_{l+2m-1},i_{l+2m})}
\cdot a_I \frac{\partial^{l+2m} (\theta_I\theta_{I_h})}{\partial\theta_{i_1}\cdots\partial\theta_{i_{l+2m}}} \partial_j\partial_{j_1}\cdots \partial_{j_l}\\
&+\sum_{\stackrel{1\leq i \leq n}{l+m=k,1\leq i_\bullet,j_\bullet\leq n}} (-1)^{\epsilon_l} \nabla_i W(hx,y)
\cdot \Phi_{(j_1,i_1),\cdots,(j_l,i_l)}^{(i_{l+1},i_{l+2}),\cdots,(i_{l+2m-1},i_{l+2m})}
\cdot a_I\frac{\partial^{l+2m+1}(\theta_{I}\theta_{I_h})}{\partial\theta_i\partial\theta_{i_1}\cdots\partial\theta_{i_{l+2m}}} \partial_{j_1}\cdots \partial_{j_l} \nonumber \\
=& \sum_{\stackrel{1\leq j \leq n}{l+m=k,1\leq i_\bullet,j_\bullet\leq n}} (-1)^{\epsilon_{l+1}} (\nabla_j W(hx,y)-\nabla_j W)
\cdot\Phi_{(j_1,i_1),\cdots,(j_l,i_l)}^{(i_{l+1},i_{l+2}),\cdots,(i_{l+2m-1},i_{l+2m})}
\cdot a_I \frac{\partial^{l+2m} (\theta_I\theta_{I_h})}{\partial\theta_{i_1}\cdots\partial\theta_{i_{l+2m}}} \partial_j\partial_{j_1}\cdots \partial_{j_l} \nonumber \\
& +\sum_{\stackrel{1\leq i,j \leq n}{l+m=k,1\leq i_\bullet,j_\bullet\leq n}} (-1)^{\epsilon_{l+1}} \nabla_i W(hx,y)
\cdot \Phi_{(j,i_{l+2}),(j_1,i_1),\cdots,(j_l,i_l)}^{(i_{l+3},i_{l+4}),\cdots,(i_{l+2m-1},i_{l+2m})} \\
&\qquad\qquad\qquad \qquad
\cdot a_I\frac{\partial^{l+2m}(\theta_{I}\theta_{I_h})}{\partial\theta_i\partial\theta_{i_{l+2}}\partial\theta_{i_1}\cdots\partial\theta_{i_l}\partial\theta_{i_{l+3}}\cdots\partial\theta_{i_{l+2m}}} 
\partial_j\partial_{j_1}\cdots \partial_{j_l}. \nonumber
\end{align*}
For $K=\{k_1,\cdots,k_i\}$ and $L=\{l_1,\cdots,l_j\}$ which are ordered subsets of $\{1,\cdots,n\}$, we denote
\[ \theta_K:=\theta_{k_1}\cdots\theta_{k_i},\quad \partial_L:=\partial_{l_1}\cdots\partial_{l_j}.\]
For
$\Psi=\sum\limits_{K,L} a_{KL} \theta_K \partial_L \in S[\theta_1,\cdots,\theta_n,\partial_1,\cdots,\partial_n],$ denote the summand $a_{KL} \theta_K \partial_L$ by $\langle \Psi,\theta_K\partial_L\rangle$. Be careful that $\langle \Psi,\theta_K\partial_L\rangle$ is not just $a_{KL}$.

So we compute $\langle \Ddown\Big( \frac{\eta_h^k}{k!}( a_I \theta_I\theta_{I_h})\Big), 
\frac{\partial^{l+2m} (\theta_I\theta_{I_h})}{\partial\theta_{i_1}\cdots\partial\theta_{i_{l+2m}}} \partial_{j_0}\partial_{j_1}\cdots \partial_{j_l} \rangle$ for fixed indices $i_1,\cdots,i_{l+2m}$ and $j_0,\cdots,j_l$, where $l+m=k$. We have
\begin{align}
& \Big\langle \Ddown\Big( \frac{\eta_h^k}{k!}( a_I \theta_I\theta_{I_h})\Big), 
\frac{\partial^{l+2m} (\theta_I\theta_{I_h})}{\partial\theta_{i_1}\cdots\partial\theta_{i_{l+2m}}} \partial_{j_0}\partial_{j_1}\cdots \partial_{j_l} \Big\rangle \label{eq:ddown1} \\
=& \sum_{\stackrel{\sigma\in {\textrm{Perm}}(\{j_0,\cdots,j_l\})}{\tau\in {\textrm{Perm}}(\{i_1,\cdots,i_{l+2m}\})}}
(-1)^{\epsilon_{l+1}} \Big(  (\nabla_{\sigma(j_0)} W(hx,y)-\nabla_{\sigma(j_0)} W)
\cdot \Phi_{(\sigma(j_1),\tau( i_1)),\cdots,(\sigma(j_l), \tau(i_l))}^{(\tau(i_{l+1}), \tau(i_{l+2})),\cdots,(\tau(i_{l+2m-1}),\tau (i_{l+2m}))}\nonumber \\
&\qquad\qquad\qquad\qquad\qquad \qquad
+ \nabla_{\tau(i_{l+1})}W(hx,y)
\cdot \Phi_{(\sigma(j_0),\tau( i_{l+2})),(\sigma(j_1),\tau( i_1)),\cdots,(\sigma(j_l), \tau(i_l))}^{(\tau(i_{l+3}), \tau(i_{l+4})),\cdots,(\tau(i_{l+2m-1}),\tau (i_{l+2m}))} \Big) \nonumber \\
&\qquad\qquad\qquad\qquad\qquad\qquad
\cdot a_I \frac{\partial^{l+2m} (\theta_I\theta_{I_h})}{\partial\theta_{\tau(i_1)}\cdots\partial\theta_{\tau(i_{l+2m})}}\partial_{\sigma(j_0)}\partial_{\sigma(j_1)}\cdots \partial_{\sigma(j_l)} \nonumber
\end{align}
%
%
where ${\textrm{Perm}}(A)$ is the set of permutations of $A$.


Next, we will compute
$\langle
\Dup\Big(\frac{\eta_h^{k+1}}{(k+1)!}\big( a_I \theta_I\theta_{I_h}\big)\Big),
\frac{\partial^{l+2m}(\theta_I\theta_{I_h})}{\partial\theta_{i_1}\cdots\partial\theta_{i_{l+2m}}} \partial_{j_0}\partial_{j_1}\cdots \partial_{j_l}
\rangle$
accordingly.
%
%
\begin{align*}
& \Dup\Big(\frac{\eta_h^{k+1}}{(k+1)!}\big( a_I \theta_I\theta_{I_h}\big)\Big) \\
=& \sum_{\stackrel{1\leq j \leq n}{l+m=k,1\leq i_\bullet,j_\bullet\leq n}}(-1)^{\epsilon_{l+1}} (y_i-hx_i)\theta_i \cdot
 g_{j_0i_0} \Phi_{(j_1 , i_1),\cdots,(j_l,i_l)}^{(i_{l+1},i_{l+2}),\cdots,(i_{l+2m-1},i_{l+2m})}
\cdot a_I \frac{\partial^{l+2m+1}(\theta_I\theta_{I_h})}{\partial\theta_{i_0}\partial\theta_{i_1}\cdots\partial\theta_{i_{l+2m}}} 
\partial_{j_0}\partial_{j_1}\cdots \partial_{j_l}\nonumber\\
&+ \sum_{\stackrel{1\leq j \leq n}{l+m=k,1\leq i_\bullet,j_\bullet\leq n}}  (-1)^{\epsilon_{l+2}+|I|+|I_h|} (y_{i}-x_{i})
g_{r i_{l+1}}\Phi_{(j_0 , i_{i_{l+2}}),(j_1,i_1),\cdots,(j_l,i_l)}^{(i_{l+3},i_{l+4}),\cdots,(i_{l+2m-1},i_{l+2m})}\nonumber \\
&\qquad\qquad\cdot a_I \frac{\partial^{l+2m}(\theta_I\theta_{I_h})}{\partial\theta_{i_{l+1}}\partial\theta_{i_{l+2}}\partial\theta_{i_1}\cdots\partial\theta_{i_l}\partial\theta_{i_{l+3}}\cdots\partial\theta_{i_{l+2m}}} 
(\partial_r \partial_{j_0}\partial_{j_1}\cdots \partial_{j_l})\cdot \theta_i, \nonumber
%
\end{align*}
hence
\begin{align}
& \Big\langle \Dup\Big(\frac{\eta_h^{k+1}}{(k+1)!}\big( a_I \theta_I\theta_{I_h}\big)\Big),
\frac{\partial^{l+2m}(\theta_I\theta_{I_h})}{\partial\theta_{i_1}\cdots\partial\theta_{i_{l+2m}}} 
\partial_{j_0}\partial_{j_1}\cdots \partial_{j_l} \Big\rangle \label{eq:dup2}\\
& \nonumber\\
=& \sum_{\stackrel{\sigma\in {\textrm{Perm}} (\{j_0,\cdots,j_l\})}{\tau\in {\textrm{Perm}}(\{i_1,\cdots,i_{l+2m}\})}}
\Big(\sum_{1\leq i \leq n} (-1)^{\epsilon_{l+1}}(x_{i}-hx_{i}) g_{\sigma(j_0), i} 
\cdot \Phi_{(\sigma(j_1),\tau( i_1)),\cdots,(\sigma(j_l), \tau(i_l))}^{(\tau(i_{l+1}), \tau(i_{l+2})),\cdots,(\tau(i_{l+2m-1}),\tau (i_{l+2m}))} \nonumber\\
 & \qquad\qquad\qquad\qquad\qquad
  \cdot a_I \theta_i\frac{\partial^{l+2m+1} (\theta_I\theta_{I_h})}{\partial\theta_i\partial\theta_{\tau(i_1)}\cdots\partial\theta_{\tau(i_{l+2m})}}\partial_{\sigma(j_0)}\partial_{\sigma(j_1)}\cdots \partial_{\sigma(j_l)} \nonumber\\
&\qquad\qquad\qquad\qquad +\sum_{j>\tau(i_{l+1})} (-1)^{\epsilon_{l+1}} 
(x_j-hx_j)f_{\tau(i_{l+1}),j}
\cdot  \Phi_{(\sigma(j_0),\tau( i_{l+2})),(\sigma(j_1,\tau(i_1)),\cdots,(\sigma(j_l), \tau(i_l))}^{(\tau(i_{l+3}), \tau(i_{l+4})),\cdots,(\tau(i_{l+2m-1}),\tau (i_{l+2m}))}\nonumber \\
&\qquad\qquad\qquad\qquad \qquad
\cdot a_I\theta_j \frac{\partial^{l+2m+1} (\theta_I\theta_{I_h})}
{\partial\theta_{\tau(i_{l+1})}\partial\theta_{j}\partial\theta_{\tau(i_{l+2})}
\partial\theta_{\tau(i_1)}\cdots\partial\theta_{\tau(i_l)}\partial\theta_{\tau(i_{l+3})}\cdots\partial\theta_{\tau(i_{l+2m})}}
\partial_{\sigma(j_0)}\partial_{\sigma(j_1)}\cdots \partial_{\sigma(j_l)} \nonumber\\
& \nonumber\\
&\qquad\qquad\qquad\qquad +\sum_{k<\tau(i_{l+1})} (-1)^{\epsilon_{l+1}} 
(x_k-hx_k)f_{k,\tau(i_{l+1})}
\cdot  \Phi_{(\sigma(j_0),\tau( i_{l+2})),(\sigma(j_1,\tau(i_1)),\cdots,(\sigma(j_l), \tau(i_l))}^{(\tau(i_{l+3}), \tau(i_{l+4})),\cdots,(\tau(i_{l+2m-1}),\tau (i_{l+2m}))}\nonumber \\
&\qquad\qquad\qquad\qquad \qquad
\cdot a_I\theta_k \frac{\partial^{l+2m+1} (\theta_I\theta_{I_h})}
{\partial\theta_k\partial\theta_{\tau(i_{l+1})}\partial\theta_{\tau(i_{l+2})}
\partial\theta_{\tau(i_1)}\cdots\partial\theta_{\tau(i_l)}\partial\theta_{\tau(i_{l+3})}\cdots\partial\theta_{\tau(i_{l+2m})}}
\partial_{\sigma(j_0)}\partial_{\sigma(j_1)}\cdots \partial_{\sigma(j_l)} \nonumber\\
& \nonumber\\
&\qquad\qquad\qquad\qquad +\sum_{r\neq j_0,\cdots,j_n} (-1)^{\epsilon_{l+2}+|I|+|I_h|+l+2} 
(y_{r}-x_{r})g_{ r ,\tau( i_{l+1})}
\cdot  \Phi_{(\sigma(j_0),\tau( i_{l+2})),(\sigma(j_1,\tau(i_1)),\cdots,(\sigma(j_l), \tau(i_l))}^{(\tau(i_{l+3}), \tau(i_{l+4})),\cdots,(\tau(i_{l+2m-1}),\tau (i_{l+2m}))}\nonumber \\
&\qquad\qquad\qquad\qquad\qquad
 \cdot a_I \frac{\partial^{l+2m} (\theta_I\theta_{I_h})}
{\partial\theta_{\tau(i_{l+1})}\partial\theta_{\tau(i_{l+2})}
\partial\theta_{\tau(i_1)}\cdots\partial\theta_{\tau(i_l)}\partial\theta_{\tau(i_{l+3})}\cdots\partial\theta_{\tau(i_{l+2m})}}
\Big(\frac{\partial\theta_r}{\partial\theta_r}\Big) \partial_{\sigma(j_0)}\partial_{\sigma(j_1)}\cdots \partial_{\sigma(j_l)}\Big)  \nonumber\\
& \nonumber\\
=& \sum_{\stackrel{\sigma\in {\textrm{Perm}} (\{j_0,\cdots,j_l\})}{\tau\in {\textrm{Perm}}(\{i_1,\cdots,i_{l+2m}\})}}
\Big[ \sum_{i\neq i_1,\cdots,i_{l+2m}} (-1)^{\epsilon_{l+1}}(x_{i}-hx_{i}) g_{\sigma(j_0), i} 
\cdot \Phi_{(\sigma(j_1),\tau( i_1)),\cdots,(\sigma(j_l), \tau(i_l))}^{(\tau(i_{l+1}), \tau(i_{l+2})),\cdots,(\tau(i_{l+2m-1}),\tau (i_{l+2m}))}  \nonumber\\
&\qquad\qquad\qquad\qquad
 +\Big( \sum_{r\neq j_0,\cdots,j_l} (-1)^{\epsilon_{l+3}} (y_{r}-x_{r})
g_{ r ,\tau( i_{l+1})} 
+ \sum_{j \neq i_1,\cdots,i_{l+2m}} (-1)^{\epsilon_{l+1}+1} (x_j-hx_j)f_{\tau(i_{l+1}),j} \nonumber\\
&\qquad\qquad\qquad\qquad\qquad
+ \sum_{k \neq i_1,\cdots,i_{l+2m}} (-1)^{\epsilon_{l+1}} (x_k-hx_k)f_{k,\tau(i_{l+1})} \Big)
\cdot \Phi_{(\sigma(j_0),\tau( i_2)),(\sigma(j_1),\tau(i_1)),\cdots,(\sigma(j_l), \tau(i_l))}^{(\tau(i_{l+3}), \tau(i_{l+4})),\cdots,(\tau(i_{l+2m-1}),\tau (i_{l+2m}))} \Big] \nonumber\\
&\qquad\qquad\qquad\qquad
\cdot a_I \frac{\partial^{l+2m} (\theta_I\theta_{I_h})}{\partial\theta_{\tau(i_1)}\cdots\partial\theta_{\tau(i_{l+2m})}} \partial_{\sigma(j_0)}\partial_{\sigma(j_1)}\cdots \partial_{\sigma(j_l)}. \nonumber
\end{align}
At the end of the computation, we needed to exclude some summands. Nevertheless, it can be shown that if we add those "dummy" terms to \eqref{eq:dup2}, then the additional dummy terms does not affect the whole computation, because of the cancellation in pairs among newly added dummy terms (we refer readers to \cite{LeeTwJac} for the detail). In summary, we have
\begin{align}
& \Big\langle \Dup\Big(\frac{\eta_h^{k+1}}{(k+1)!}\big( a_I \theta_I\theta_{I_h}\big)\Big),
\frac{\partial^{l+2m}(\theta_I\theta_{I_h})}{\partial\theta_{i_1}\cdots\partial\theta_{i_{l+2m}}} 
\partial_{j_0}\partial_{j_1}\cdots \partial_{j_l} \Big\rangle \label{eq:dup3}\\
& \nonumber\\
=& \sum_{\stackrel{\sigma\in {\textrm{Perm}} (\{j_0,\cdots,j_l\})}{\tau\in {\textrm{Perm}}(\{i_1,\cdots,i_{l+2m}\})}}
(-1)^{\epsilon_{l+1}} \Big[ \sum_{i\geq \sigma(j_0)}(x_{i}-hx_{i}) g_{\sigma(j_0), i} 
\cdot \Phi_{(\sigma(j_1),\tau( i_1)),\cdots,(\sigma(j_l), \tau(i_l))}^{(\tau(i_{l+1}), \tau(i_{l+2})),\cdots,(\tau(i_{l+2m-1}),\tau (i_{l+2m}))}  \nonumber\\
&\qquad\qquad\qquad\qquad\qquad\qquad
 +\Big( -\sum_{r\leq \tau(i_{l+1})}  (y_{r}-x_{r})
g_{ r ,\tau( i_{l+1})} 
- \sum_{j> \tau(i_{l+1})} (x_j-hx_j)f_{\tau(i_{l+1}),j} \nonumber\\
&\qquad\qquad\qquad\qquad\qquad\qquad\qquad
+ \sum_{k<\tau(i_{l+1})} (x_k-hx_k)f_{k,\tau(i_{l+1})} \Big)
\cdot \Phi_{(\sigma(j_0),\tau( i_{l+2})),(\sigma(j_1),\tau(i_1)),\cdots,(\sigma(j_l), \tau(i_l))}^{(\tau(i_{l+3}), \tau(i_{l+4})),\cdots,(\tau(i_{l+2m-1}),\tau (i_{l+2m}))} \Big] \nonumber\\
&\qquad\qquad\qquad\qquad\qquad\qquad
\cdot a_I \frac{\partial^{l+2m} (\theta_I\theta_{I_h})}{\partial\theta_{\tau(i_1)}\cdots\partial\theta_{\tau(i_{l+2m})}} \partial_{\sigma(j_0)}\partial_{\sigma(j_1)}\cdots \partial_{\sigma(j_l)}. \nonumber
\end{align}
\end{step}

\begin{step}
We show
\begin{align}
\sum_{i: i\geq j} (x_i-hx_i)g_{ji}&=\nabla_j W - \nabla_j W(hx,y) \label{eq:nabladiff}
\end{align}
and
\begin{align}\label{eq:nablahx}
\sum_{j: j\leq i}(y_j-x_j) g_{ji}+ \sum_{j:j>i} (x_j-hx_j) f_{ij}-\sum_{k:k<i} (x_k-hx_k) f_{ki}&= \nabla_i W(hx,y)-\partial_i W^h. 
\end{align}
(By definition of $W^h$, $\partial_i W^h=0$ if $i\notin I^h$.)

If $j\in I_h$, then by direct computation we have
\[ (x_j-hx_j)g_{jj}=-\frac{\barW_{j,j}-\barW_{j-1,j-1}}{y_j-hx_j}+\frac{\barW_{j,j}-\barW_{j-1,j}}{y_j-x_j}.\]
It still holds for $j\in I^h$, namely the right hand side is zero, because in this case 
\[ \frac{\barW_{j,j}-\barW_{j-1,j-1}}{y_j-hx_j}=\frac{\barW_{j,j}-\barW_{j-1,j}}{y_j-x_j}.\]
For $j<i\leq n$, we can show
\[(x_i-hx_i)g_{ji}= \frac{(\barW_{j,i}-\barW_{j-1,i})-(\barW_{j,i-1}-\barW_{j-1,i-1})}{y_j-x_j}\]
as follows: for $i\in I_h$, it is straightforward by definition of $g_{ji}$. If $i\in I^h$, then the LHS is zero, and the RHS is also zero due to $\barW_{j,i}=\barW_{j,i-1}$ and $\barW_{j-1,i}=\barW_{j-1,i-1}$. Hence
\[\sum_{i: i \geq j}(x_i-hx_i) g_{ji} = 
-\frac{\barW_{j,j}-\barW_{j-1,j-1}}{y_j-hx_j}+\frac{\barW_{j,n}-\barW_{j-1,n}}{y_j-x_j}.\]
Now it is straightforward from \eqref{eq:Wbar} that 
\[  \frac{\barW_{j,j}-\barW_{j-1,j-1}}{y_j-hx_j}= \nabla_j W(hx,y),\quad 
\frac{\barW_{j,n}-\barW_{j-1,n}}{y_j-x_j}= \nabla_j W\]
and \eqref{eq:nabladiff} is proved.

It remains to show \eqref{eq:nablahx}. First we assume $i \in I_h$. By direct computation we have
\[ -\sum_{j =1}^i(y_j-x_j) g_{ji}=  -\nabla_i W(hx,y)+\frac{\barW_{0,i}-\barW_{0,i-1}}{x_i-hx_i}\]
and
\begin{align}
& \sum_{k=1}^{i-1} (x_k-hx_k) f_{ki} -\sum_{j=i+1}^n (x_j-hx_j) f_{ij}\nonumber  \\
=&\sum_{k=1}^{i-1}\frac{(\tildeW_{k,i}-\tildeW_{k-1,i})-(\tildeW_{k,i-1}-\tildeW_{k-1,i-1})}{x_i-hx_i}
- \sum_{j=i+1}^n\frac{(\tildeW_{i,j}-\tildeW_{i-1,j})-(\tildeW_{i,j-1}-\tildeW_{i-1,j-1})}{x_i-hx_i}\nonumber\\
=&\frac{(\tildeW_{i-1,i}-\tildeW_{0,i})-(\tildeW_{i-1,i-1}-\tildeW_{0,i-1})+(\tildeW_{i-1,n}-\tildeW_{i-1,i})-(\tildeW_{i,n}-\tildeW_{i,i})}{x_i-hx_i} \nonumber \\
=& \frac{\tildeW_{0,i-1}-\tildeW_{0,i}-\tildeW_{i-1,i-1}+\tildeW_{i-1,n}-\tildeW_{i,n}+\tildeW_{i,i}}{x_i-hx_i} \nonumber \\
=&- \frac{\barW_{0,i}-\barW_{0,i-1}}{x_i-hx_i}.
\end{align}
The last identity is due to $\barW_{0,i}=\tildeW_{0,i}$ and $\tildeW_{i,i}=\tildeW_{i,n}$. Therefore, if $i\in I_h$, then
\[\sum_{j=1}^n (y_j-x_j) g_{ji}+ \sum_{j:j>i} (x_j-hx_j) f_{ij}-\sum_{k:k<i} (x_k-hx_k) f_{ki} = \nabla_i W(hx,y).\]
Next let $i\in I^h$. We have 
\[ -\sum_{j=1}^i (y_j-x_j)g_{ji}= -\nabla_i W(hx,y)+\partial_i \big(W(x_1,\cdots,x_i,hx_{i+1},\cdots,hx_n)\big),\]
and
\begin{align*}
& \sum_{k=1}^{i-1} (x_k-hx_k) f_{ki} -\sum_{j=i+1}^n (x_j-hx_j) f_{ij} \\
=& \partial_i \big(W(x_1^h,\cdots,x_{i-1}^h,x_i,hx_{i+1},\cdots,hx_n)\big) - \partial_i \big( W (x_1,\cdots,x_i,hx_{i+1},\cdots,hx_n) \big)\\
&+ \partial_i \big(W(x_1^h,\cdots,x_i^h,hx_{i+1},\cdots,hx_n)\big)-\partial_i \big(W(x_1^h,\cdots,x_i^h,x_{i+1},\cdots,x_n)\big)\\
&+ \partial_i \big(W(x_1^h,\cdots,x_n^h)\big)-\partial_i \big( W(x_1^h,\cdots,x_i^h,hx_{i+1},\cdots,hx_n )\big)\\
=& \partial_i W^h - \partial_i \big(W(x_1,\cdots,x_i,hx_{i+1},\cdots,hx_n)\big)
\end{align*}
where the last identity is due to $\tildeW_{i,i}=\tildeW_{i,n}$ and $x_i=hx_i$. We conclude that for $i\in I^h$
\[ \sum_{j=1}^n (y_j-x_j) g_{ji}+ \sum_{j:j>i} (x_j-hx_j) f_{ij}-\sum_{k:k<i} (x_k-hx_k) f_{ki} = \nabla_i W(hx,y) - \partial_i W^h\]
and we finish the proof of \eqref{eq:nablahx}.
\end{step}

\begin{step}
We rewrite  \eqref{eq:dup3} using \eqref{eq:nabladiff} and \eqref{eq:nablahx}.

\begin{align*}
& \Big\langle \Dup\Big(\frac{\eta_h^{k+1}}{(k+1)!}\big( a_I \theta_I\theta_{I_h}\big)\Big),
\frac{\partial^{l+2m}(\theta_I\theta_{I_h})}{\partial\theta_{i_1}\cdots\partial\theta_{i_{l+2m}}} 
\partial_{j_0}\partial_{j_1}\cdots \partial_{j_l} \Big\rangle \\
& \nonumber\\
=& \sum_{\stackrel{\sigma\in {\textrm{Perm}} (\{j_0,\cdots,j_l\})}{\tau\in {\textrm{Perm}}(\{i_1,\cdots,i_{l+2m}\})}}
(-1)^{\epsilon_{l+1}} \Big( ( \nabla_{\sigma(j_0)} W - \nabla_{\sigma(j_0)} W(hx,y))
\cdot \Phi_{(\sigma(j_1),\tau( i_1)),\cdots,(\sigma(j_l), \tau(i_l))}^{(\tau(i_{l+1}), \tau(i_{l+2})),\cdots,(\tau(i_{l+2m-1}),\tau (i_{l+2m}))}  \nonumber\\
&\qquad\qquad\qquad\qquad\qquad\qquad
 +\big( \partial_{\tau(i_{l+1})} W^h - \nabla_{\tau(i_{l+1})}W(hx,y)\big)
 \cdot \Phi_{(\sigma(j_0),\tau( i_{l+2})),(\sigma(j_1),\tau(i_1)),\cdots,(\sigma(j_l), \tau(i_l))}^{(\tau(i_{l+3}), \tau(i_{l+4})),\cdots,(\tau(i_{l+2m-1}),\tau (i_{l+2m}))} \Big) \nonumber\\
&\qquad\qquad\qquad\qquad\qquad\qquad
\cdot a_I \frac{\partial^{l+2m} (\theta_I\theta_{I_h})}{\partial\theta_{\tau(i_1)}\cdots\partial\theta_{\tau(i_{l+2m})}} \partial_{\sigma(j_0)}\partial_{\sigma(j_1)}\cdots \partial_{\sigma(j_l)}. \nonumber
\end{align*}
Combining \eqref{eq:ddown1},
\begin{align*}
& \Big\langle \Ddown\Big( \frac{\eta_h^k}{k!}( a_I \theta_I\theta_{I_h})\Big) + \Dup\Big(\frac{\eta_h^{k+1}}{(k+1)!}\big( a_I \theta_I\theta_{I_h}\big)\Big),
\frac{\partial^{l+2m}(\theta_I\theta_{I_h})}{\partial\theta_{i_1}\cdots\partial\theta_{i_{l+2m}}} 
\partial_{j_0}\partial_{j_1}\cdots \partial_{j_l} \Big\rangle \\
& \nonumber\\
=& \sum_{\stackrel{\sigma\in {\textrm{Perm}} (\{j_0,\cdots,j_l\})}{\tau\in {\textrm{Perm}}(\{i_1,\cdots,i_{l+2m}\})}}
(-1)^{\epsilon_{l+1}}  \partial_{\tau(i_{l+1})} W^h 
 \cdot \Phi_{(\sigma(j_0),\tau( i_{l+2})),(\sigma(j_1),\tau(i_1)),\cdots,(\sigma(j_l), \tau(i_l))}^{(\tau(i_{l+3}), \tau(i_{l+4})),\cdots,(\tau(i_{l+2m-1}),\tau (i_{l+2m}))} \\
&\qquad\qquad\qquad\qquad
\cdot a_I \frac{\partial^{l+2m} (\theta_I\theta_{I_h})}{\partial\theta_{\tau(i_1)}\cdots\partial\theta_{\tau(i_{l+2m})}} \partial_{\sigma(j_0)}\partial_{\sigma(j_1)}\cdots \partial_{\sigma(j_l)}. \nonumber
\end{align*}
Rearranging indices, we have
\begin{align*}
& \Ddown\Big( \frac{\eta_h^k}{k!}\big(\sum_I a_I \theta_I\theta_{I_h}\big)\Big) 
+ \Dup\Big(\frac{\eta_h^{k+1}}{(k+1)!}\big(\sum_I a_I \theta_I\theta_{I_h}\big)\Big)\\
=& \sum_{\stackrel{l+m=k}{1\leq i_\bullet,j_\bullet,k_\bullet \leq n}}(-1)^{\epsilon_{l+1}+1} \Phi_{(j_0,i_0),\cdots,(j_l,i_l)}^{(k_1,k_2),\cdots,(k_{2m-3},k_{2m-2})}
\frac{\partial^{l+m-1}}{\partial\theta_{i_0}\cdots\partial\theta_{i_l}\partial\theta_{k_1}\cdots\partial\theta_{k_{2m-2}}}
\Big( \sum_{\stackrel{|I|=r}{i\in I^h}}\partial_i W^h  \cdot a_I \frac{\partial(\theta_I\theta_{I_h})}{\partial\theta_i} \Big)\partial_{j_0}\cdots\partial_{j_l},
\end{align*}
but since $\sum_{|I|=r} a_I \theta_I\theta_{I_h}$ is a cocycle of the Koszul complex of $(\partial_i W^h: i\in I^h)$, we have
\[  \sum_{\stackrel{|I|=r}{i\in I^h}}\partial_i W^h  \cdot a_I \frac{\partial(\theta_I\theta_{I_h})}{\partial\theta_i}= 0.
\]
Therefore,
\[ \Ddown\Big( \frac{\eta_h^k}{k!}\big(\sum_I a_I \theta_I\theta_{I_h}\big)\Big) 
+ \Dup\Big(\frac{\eta_h^{k+1}}{(k+1)!}\big(\sum_I a_I \theta_I\theta_{I_h}\big)\Big)=0\]
and our assertion follows. \qedhere
\end{step}
\end{proof}



It remains to show that $\exp(\eta_h)$ indeed gives a quasi-inverse of $(-)_{kos}$.

\begin{theorem}
Let $\gamma$ be a cocycle of $K^{\bullet} (\partial_{x_i}W^h : i\in I^h) \cdot \theta_{I_h}$. Then we have the following identity of Koszul cohomology classes:
\[ \big[\big(\exp(\eta_h)(\gamma)\big)_{kos}\big]=[\gamma].\]
\end{theorem}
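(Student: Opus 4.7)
The plan is to prove $[(\exp(\eta_h)(\gamma))_{kos}] = [\gamma]$ by decomposing $\exp(\eta_h) = \id + \sum_{k\geq 1}\eta_h^k/k!$ and analyzing each summand: the zeroth-order term will produce $\gamma$ on the nose, while the higher-order corrections will assemble into Koszul coboundaries and therefore vanish in cohomology.

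For the zeroth-order term, since $\gamma = \sum_{I\subset I^h} a_I \theta_I \theta_{I_h}$ contains no $\partial$-variables, its normal form has $K=\emptyset$ and $J = I\cup I_h \supset I_h$ for every summand, so the extraction $(pr\circ -)_+$ leaves it untouched. Since each $a_I \in \Bbbk[x_i : i\in I^h]$ is $\Fix(h)$-invariant, restriction to $\Fix(h)$ acts trivially on coefficients, and we obtain $(\gamma)_{kos} = \gamma$ on the nose.

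For the higher-order terms, two observations drive the argument. First, for the surviving $\theta$-part to contain $I_h$ after $\eta_h$ removes $\theta$-variables, no $\theta$-index in $I_h$ may be eliminated; hence the indices $i,j$ of the relevant $g_{ji}^h$ and $f_{ji}^h$ coefficients must both lie in $I^h$. But by the ``otherwise'' clause in \eqref{eq:gjifji}, $f_{ji}^h = 0$ whenever both $i,j \in I^h$, so all $f$-type contributions vanish outright. Second, a divided-difference computation (Taylor expansion of $\partial_i\barW^h_{j,i} - \partial_i\barW^h_{j-1,i}$ in $y_j - x_j$, together with the identity $hx_k^h = x_k^h$) gives the clean identity $g_{ji}^h|_{\Fix(h)} = \partial_i\partial_j W^h$ for $j<i$ with $i,j\in I^h$. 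Consequently the $k=1$ contribution at $\theta_{I''}\theta_{I_h}$ takes the form $\sum \pm a_{I''\cup\{i,j\}}\partial_i\partial_j W^h$ (summed over $j<i$ with $\{i,j\}\cap I''=\emptyset$), and differentiating the Koszul cocycle condition $\sum_k \pm a_{I'\cup\{k\}}\partial_k W^h = 0$ with respect to $x_j$ rewrites this as $-\sum_{j,k}\pm(\partial_j a_{I'\cup\{j,k\}})\partial_k W^h$, which is a manifest Koszul coboundary. For $k\geq 2$, iterated applications of $\eta_h$ yield products of second partial derivatives paired with coefficients $a_I$, and iterated differentiation of the cocycle condition again recasts the whole contribution as a Koszul coboundary by absorbing one factor $\partial_\bullet W^h$ into a derivative of $a_I$ at each level.

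The principal obstacle will be the combinatorial and sign bookkeeping involved in iterating $\eta_h^k$, since each application produces many mixed $g$- and $f$-type cross terms whose uniform organization is not automatic. An induction on $k$, or alternatively a filtration by $\partial$-degree that reduces each level to an instance of the $k=1$ calculation above, should provide the cleanest route to the conclusion.
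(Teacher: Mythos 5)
Your zeroth-order analysis and your observation that the $f^h_{ji}$-terms drop out of $(pr\circ-)_+$ (because a nonzero $f^h_{ji}$ forces $i\in I_h$ or $j\in I_h$ and hence destroys $\theta_{I_h}$) both match the paper, as does your reduction of the higher-order contributions to products of the $g^h_{ji}$, i.e.\ to the terms \eqref{eq:alphalower}. The gap is in the mechanism you propose for showing those higher-order pieces $\phi_{kos}^{-s+2k}$, $k\geq 1$, are coboundaries. Your identity $g^h_{ji}|_{\Fix(h)}=\partial_i\partial_jW^h$ is correct, but the resulting expression is a sum only over \emph{ordered} pairs $j<i$, because $g^h_{ji}=0$ for $j>i$ and the diagonal terms are killed by the normal-form projection. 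The differentiated cocycle condition $\partial_j\bigl(\sum_i \pm a_{I'\cup\{i\}}\partial_iW^h\bigr)=0$ controls the sum over \emph{all} $i\notin I'$ for a fixed $j$; summing these relations over $j$ only reproduces the fully symmetrized double sum, and since the Hessian factor $\partial_i\partial_jW^h$ is symmetric while $\frac{\partial^2\theta_I}{\partial\theta_i\partial\theta_j}$ is antisymmetric, that symmetrized sum vanishes identically and carries no information about the half-sum you actually need. Concretely, for $n=3$, $h=1$, $s=2$ the three usable differentiated relations constrain only the combinations $c_{12}W_{12}+c_{13}W_{13}$, $c_{12}W_{12}-c_{23}W_{23}$, $c_{13}W_{13}+c_{23}W_{23}$ modulo the Jacobian ideal, and the target $c_{12}W_{12}+c_{13}W_{13}+c_{23}W_{23}$ does not lie in their span; the remaining differentiated relations involve diagonal Hessian entries and other products and do not close the gap. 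For $k\geq 2$ the situation is worse: iterated differentiation of the cocycle condition produces third and higher derivatives of $W^h$, which simply do not occur in the products $\prod_a g^h_{j_ai_a}|_{\Fix(h)}$, so ``absorbing one factor of $\partial_\bullet W^h$ at each level'' is not an available move.

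The paper's proof of this step is of a genuinely different nature and does not manipulate the cocycle condition at all. For each $k>0$ it compares the two closed morphisms $\phi^{-s+2k}$ and $\bigl(\exp(\eta_h)(\phi_{kos}^{-s+2k})\bigr)^{-s+2k}$, which are homotopic because $(-)_{kos}$ is already known to be a quasi-isomorphism (Proposition \ref{prop:mfkos}) and both morphisms have the same Koszul image. Restricting the resulting homotopy relation to the submodule $S\subset S\langle\theta_1,\cdots,\theta_n\rangle$ and then to $\Fix(h)$ kills the $(y_i-hx_i)\theta_i$ and $(y_i-x_i)\theta_i$ contributions and turns $\nabla_iW(hx,y)$ into $\partial_{x_i}W^h$, exhibiting $\phi_{kos}^{-s+2k}$ as lying in the image of $\sum_{i\in I^h}\partial_{x_i}W^h\cdot\partial_i$, i.e.\ as a Koszul coboundary. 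Some input of this homotopical kind (or an equally global argument) appears unavoidable; as written, your direct cocycle-differentiation route does not establish the key coboundary claim.
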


\begin{proof}
Without loss of generality let $\gamma\in K^{-s}(\partial_{x_i}W^h : i\in I^h) \cdot \theta_{I_h}$, so
\[\displaystyle\gamma=\sum_{|I|=s} c_I \theta_I \theta_{I_h}.\]
Decompose $\eta_h=\eta_h^{1,1}+\eta_h^{2,0}$ where
\[ \eta_h^{1,1}(\theta_I \partial_J)=  \sum (-1)^{|I|} g^h_{ji} \frac{\partial\theta_I}{\partial\theta_i}\partial_j\partial_J, \quad \eta_h^{2,0}(\theta_I \partial_J)=\sum f^h_{ji}\frac{\partial^2 \theta_I}{\partial\theta_j \partial\theta_i}\partial_J.\]
Since the operations $\eta_h^{1,1}$ and $\eta_h^{2,0}$ commute, we have
\begin{equation}\label{eq:expdecomp}
\exp(\eta_h)(\gamma)=\sum_n \sum_{k+l=n} \frac{(\eta_h^{1,1})^k}{k!}\cdot \frac{(\eta_h^{2,0})^l}{l!} (\gamma).
\end{equation}
Since $f^h_{ji} \neq 0$ only if $i\in I_h$ or $j\in I_h$, we deduce that
\[ \big(pr\circ \big(\exp(\eta_h)(\gamma)\big)\big)_+=\big(pr\circ \big(\exp(\eta_h^{1,1})(\gamma)\big)\big)_+\]
and the right hand side is explicitly
\begin{equation}\label{eq:prexp}
\big(pr\circ \big(\exp(\eta_h^{1,1})(\gamma)\big)\big)_+ = \gamma+ 
\sum_{k\geq 1}\sum_{\stackrel{i_\bullet,j_\bullet\in I,}{\{i_1,\cdots,i_k\} \cap \{j_1,\cdots,j_k\}=\emptyset}}\pm g^h_{j_1 i_1}\cdots g^h_{j_k i_k}\cdot c_I \frac{\partial^k \theta_I}{\partial\theta_{i_1}\cdots\partial\theta_{i_k}}\theta_{I_h}\cdot 
\partial_{j_1}\cdots\partial_{j_k}. 
\end{equation}
(We did not specify signs in the summation because it will not be needed.)

Now we introduce some degree notations. Given a Koszul complex element $\alpha\in K^\bullet(\partial_{x_i} W^h: i\in I^h)\cdot \theta_{I_h}$, let us denote
\[ \alpha= \sum_m \alpha^m \]
where $\alpha^m\in K^{m}(\partial_{x_i}W^h:i\in I^h)\cdot \theta_{I_h}$. 

When $\phi=\exp(\eta_h)(\gamma)$, we obtain
\[ \phi_{kos}=\sum_{k \geq 0} \phi_{kos}^{-s+2k}\]
where
\begin{align}
\phi_{kos}^{-s+2k}=&
\Big(\sum_{\stackrel{i_\bullet,j_\bullet\in I,}{\{i_1,\cdots,i_k\} \cap \{j_1,\cdots,j_k\}=\emptyset}}
\pm g^h_{j_1 i_1}\cdots g^h_{j_k i_k} \cdot c_I 
\frac{\partial^{2k} \theta_I}{\partial\theta_{i_1}\cdots\partial\theta_{i_k}\partial_{j_1}\cdots\partial_{j_k}}\theta_{I_h}\Big)|_{\Fix(h)}. \label{eq:alphalower}
\end{align}
Clearly $\phi_{kos}^{-s}=\gamma$. Let us show that $\phi_{kos}^{-s+2k}$ is homologous to zero for $k>0$. Consider the following decomposition
\[ \phi=\sum_{k\geq 0} \phi^{-s+2k}\]
where each component $\phi^{-s+2k}$ corresponds to $\phi_{kos}^{-s+2k}$ by the quasi-isomorphism $(-)_{kos}$. Two morphisms $ \phi^{-s+2k}$ and $\big(\exp(\eta_h)(\phi_{kos}^{-s+2k})\big)^{-s+2k}$ of matrix factorizations are homotopic, because their corresponding Koszul cocycles are identical. Consider these morphisms restricted to the submodule $S\subset S\langle \theta_1,\cdots,\theta_n\rangle$ in the following diagram.
\[\displaystyle\xymatrix{
& & & & S \ar[ddllll]_(.3){s_1}\ar@<0.5ex>[dd]^{\phi^{-s+2k}} \ar@<-0.5ex>[dd]_-{(\exp(\eta_h)(\phi_{kos}^{-s+2k}))^{-s+2k}}\ar[rr]^-{\sum_i (y_i-x_i)\theta_i} \ar[ddrr]^(.3){s_2} & & \bigoplus_i S\cdot \theta_i \ar[ddll]^(.2){s_3}\\
& & & & & & \\
\bigoplus\limits_{|J|=s-2k,i\in J\sqcup I_h} S\cdot \theta_{J\sqcup I_h-\{i\}} \ar[rrrr]^-{\sum_i (y_i-hx_i)\theta_i} & & & &
 \bigoplus\limits_{|J|=s-2k}S\cdot \theta_J\theta_{I_h} & & \bigoplus\limits_{|J'|=s-2k+1} S\cdot \theta_{J'}\theta_{I_h} \ar[ll]_-{\sum_{i\in I^h} \nabla_i W(hx,y)\partial_i}}
\]
The restriction of $\phi^{-s+2k}$ to $S$ is $0$ for $k>0$, because any component of $\phi$ which does not consist of contraction operators $\partial_1,\cdots,\partial_n$ is differentiated at least once by $\frac{\partial}{\partial \theta_i}$ for some $i\in I_h$.
On the other hand, $\big(\exp(\eta_h)(\phi_{kos}^{-s+2k})\big)^{-s+2k}$ is restricted to $\phi_{kos}^{-s+2k}$ which is identified as a module map from $S$.
By homotopy relation we have
\[ \phi_{kos}^{-s+2k}= \big(\sum_i (y_i-hx_i)\theta_i\big)\cdot s_1 +\big(\sum_{i\in I^h} \nabla_i W(hx,y)\partial_i\big)\cdot s_2 +(-1)^{|s_3|+1} s_3 \cdot \sum_i (y_i-x_i)\theta_i.\]

Since $\phi_{kos}^{-s+2k}$ depends only on $x_1^h,\cdots,x_n^h$, we have
\[ \phi_{kos}^{-s+2k}=\Big( \big(\sum_i (y_i-hx_i)\theta_i\big)\cdot s_1 +\big(\sum_{i\in I^h} \nabla_{i} W(hx,y)\partial_i\big)\cdot s_2 +(-1)^{|s_3|+1} s_3 \cdot \sum_i (y_i-x_i)\theta_i\Big)|_{\Fix(h)}.\]
It is evident that $(y_i-hx_i)\theta_i |_{\Fix(h)}=(y_i-x_i)\theta_i|_{\Fix(h)}=0$, so 
\[ \phi_{kos}^{-s+2k}=\Big(\big(\sum_{i\in I^h} \nabla_{i} W(hx,y)\partial_i\big)\cdot s_2\Big) |_{\Fix(h)}.\]
By definition of $\nabla_i W$, if $i\in I^h$ then
\[ (\nabla_i W)(x_1^h,\cdots,x_n^h,x_1^h,\cdots,x_n^h)= \partial_{x_i} W^h,\]
so $\phi_{kos}^{-s+2k}$ is in the image of $\sum_{i\in I^h} \partial_{x_i} W^h \cdot \partial_i$, hence is a coboundary element.
\end{proof}

There was a noteworthy fact in the middle of the proof. This will be useful in explicit examples.
\begin{prop}\label{prop:discardpartial}
Let $\phi=\sum_I a_I \theta_I+ \sum\limits_{\stackrel{J,K}{K\neq \emptyset}}b_{JK} \theta_J\partial_K$ be a closed morphism in $hom(\Delta_1,\Delta_h)$. 
Then \[\phi_{kos}= (\sum_I a_I\theta_I)_{kos}.\]
\end{prop}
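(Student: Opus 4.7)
My plan is to compute $\phi_{kos}$ directly from its definition, isolate the contribution of the ``with-$\partial$'' part, and show that this residual contribution vanishes by combining the closedness $D\phi=0$ with the collapse of the factors $(y_i-hx_i)$ and $(y_i-x_i)$ on $\Fix(h)$.

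First I would expand $pr\circ\phi$ in the normal form $\sum_{K\subseteq[n]} c_K\,\theta_{K^c}\theta_K\partial_{K^{op}}$. A term $a_I\theta_I$ of $\phi$ contributes to $c_{I^c}$; a term $b_{MN}\theta_M\partial_N$ (with $N\neq\emptyset$) contributes to $c_L$ only when $N\subseteq M$ and $L=N\sqcup M^c$. Plugging into $\phi_{kos}=\big(\sum_{J\supseteq I_h}\sum_K b_{JK}\theta_J\big)\big|_{\Fix(h)}$, the contribution of the no-$\partial$ part is exactly $(\sum_I a_I\theta_I)_{kos}$, and the residual piece is
\[
X \;=\; \sum_{L\subseteq I^h}\;\sum_{\emptyset\neq N\subseteq L}\;\pm\,b_{N\sqcup L^c,\,N}\big|_{\Fix(h)}\,\theta_{L^c}.
\]
So the proposition reduces to the claim that $X=0$.

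To prove $X=0$, I would exploit $D\phi=\Dup\phi+\Ddown\phi=0$. Because $(y_i-hx_i)|_{\Fix(h)}=(y_i-x_i)|_{\Fix(h)}=0$, one has $(\Dup\phi)|_{\Fix(h)}=0$ automatically, hence $(\Ddown\phi)|_{\Fix(h)}=0$; comparing $\partial$-content, this yields for each $N\neq\emptyset$ that $\sum_M\tilde b_{MN}\theta_M$ is a Koszul cocycle with respect to $\sum_{i\in I^h}\partial_i W^h\,\partial_i$. To upgrade these cocycle conditions to the desired chain-level vanishing of $X$, I would extract further identities from the polynomial equation $\Dup\phi=0$: namely, by examining the coefficients of $\theta_M\partial_\emptyset$ (to which $\Ddown\phi$ contributes nothing, since $\Ddown$ always produces at least one $\partial$ in the output that cannot be absorbed), one obtains relations whose factors $(1-h_i)x_i$ for $i\in I_h$ and $(y_i-x_i)$ for $i\in I^h$ separate cleanly when one substitutes $x_{I_h}=y_{I_h}=0$. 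Because the $(y_i-x_i)$ for $i\in I^h$ are algebraically independent in the polynomial ring, comparing coefficients forces $\tilde b_{N\sqcup L^c,N}=0$ for each $(L,N)$ contributing to $X$, which is precisely $X=0$.

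The main obstacle, and the step most requiring care, is the combinatorial bookkeeping in Step~3: one must track signs, parametrize the relevant $(M,N)$ pairs, and verify that the polynomial identities extracted from the various $\theta_M\partial_\emptyset$ coefficients of $\Dup\phi=0$ collectively pin down \emph{every} $\tilde b_{N\sqcup L^c,N}$ appearing in $X$, without accidental cancellation against the $a$-terms. Should the equality be intended only up to Koszul coboundary, a lighter alternative is available: mimic the homotopy argument in the proof of the preceding theorem and show that $X$ lies in the image of $\sum_{i\in I^h}\partial_i W^h\,\partial_i$, making it a coboundary in $K^\bullet(\partial W^h)\cdot\theta_{I_h}$.
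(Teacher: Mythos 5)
Your Step 1 (reducing the claim to the vanishing of the residual sum $X$) reads the definition of $(-)_{kos}$ correctly, and your Step 2 observation that $(\Dup\phi)|_{\Fix(h)}=0$ forces $(\Ddown\phi)|_{\Fix(h)}=0$ is sound. The argument breaks in Step 3. You are not entitled to the equation $\Dup\phi=0$: closedness only gives $\Dup\phi+\Ddown\phi=0$, and your reason for discarding $\Ddown\phi$ from the $\theta_M\partial_\emptyset$ coefficients is incorrect. The left Clifford multiplication by $\partial_i$ inside $\Ddown$ contracts against the $\theta$'s of $\phi$ (via $\partial_i\theta_I=\pm\theta_I\partial_i\pm\theta_{I\setminus\{i\}}$), so $\Ddown$ applied to the $\partial$-free part $\sum_I a_I\theta_I$ produces $\partial$-free output $\sum_{i\in I}\pm a_I\,\nabla_iW(hx,y)\,\theta_{I\setminus\{i\}}$. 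The true $\partial$-free component of $D\phi=0$ therefore couples the coefficients $b_{M,\{i\}}(y_i-x_i)$ to the $a_I$'s and to $\nabla_iW(hx,y)$; upon restriction to $\Fix(h)$ every $b$-term is killed by its factor $(y_i-x_i)$, and what survives is only the Koszul cocycle condition on $\sum_I a_I\theta_I|_{\Fix(h)}$ --- no constraint on the $b$'s at all. So the relations you hope will ``pin down every $\tilde b$'' are not there.

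More seriously, the chain-level identity $X=0$ that Step 3 aims at is false in general, so no amount of bookkeeping can close the gap. Take $h=1$ (so $I_h=\emptyset$ and all $f^h_{ji}=0$ by \eqref{eq:gjifji}), $W=x_1x_2x_3$, and the Koszul cocycle $\gamma=x_1x_2\theta_1\theta_2-x_1x_3\theta_1\theta_3+x_2x_3\theta_2\theta_3$. Then $\phi=\exp(\eta_1)(\gamma)$ is closed by Theorem \ref{thm:cocycle} and its $\partial$-free part is exactly $\gamma$, so the Proposition read on the nose would force the degree-zero component of $\phi_{kos}$ to vanish; but by \eqref{eq:alphalower} that component is a signed sum of the three terms $g^1_{12}c_{12},\,g^1_{13}c_{13},\,g^1_{23}c_{23}$ evaluated at $y=x$, i.e.\ an odd signed sum of copies of $x_1x_2x_3$, which is nonzero --- though it does lie in the Jacobian ideal $(x_2x_3,x_1x_3,x_1x_2)$, i.e.\ it is a Koszul coboundary. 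This is consistent with the paper: the Proposition carries no separate proof and is extracted from the homotopy argument in the middle of the preceding theorem's proof, which establishes precisely that the extra contributions $\phi_{kos}^{-s+2k}$, $k>0$, lie in the image of $\sum_{i\in I^h}\partial_{x_i}W^h\cdot\partial_i$; the identity should accordingly be read as an equality of cohomology classes (which is all that is used in Section \ref{sec:example}). Your closing ``fallback'' --- mimicking that homotopy argument --- is therefore not a lighter alternative but the intended, and apparently the only viable, proof.
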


\begin{definition}
    Let $(W,G)$ be an orbifold LG model (with diagonal $G$-action). Then 
    \[\cup_{kos}: K^\bullet(W,G) \otimes K^\bullet(W,G) \to K^\bullet(W,G),\] 
    defined for $\alpha \in K^\bullet(W,h)$ and $\beta \in K^\bullet(W,h')$ as 
    \begin{equation}\label{def:kosmulti} \alpha \cup_{kos} \beta:= \big(h'_*(\exp(\eta_h)(\alpha)) \cdot \exp(\eta_{h'})(\beta)\big)_{kos},\end{equation}
    induces an associative algebra structure on $H^\bullet(K^\bullet(W,G))$, where for $\phi=f(x,y)\theta_I \partial_J \in hom(\Delta_1,\Delta_{h})$,
\begin{equation}\label{eq:translation}
h'_*\phi:= f(x,h'^{-1}y)\cdot \rho(h'^{-1})(\theta_I\partial_J) \in hom(\Delta_{h'},\Delta_{h'h}).
\end{equation}
\end{definition}
The role of \eqref{eq:translation} is clear: we cannot compose morphisms $\exp(\eta_h)(\alpha)$ and $\exp(\eta_{h'})(\beta)$ as themselves due to the discrepancy of domain and codomain, so we need to translate the latter morphism properly. See \cite{CLe} for more explanation.

We want to emphasize that every step involved in \eqref{def:kosmulti} only requires simple calculus of differential operators and can be made as an automated process (for example in a computer program if needed), though at first glance it looks not so intrinsic due to the detour to the category of matrix factorizations. We need to remark is that the product structure on Koszul cohomology of an orbifold LG model was also defined in \cite{ShkLGorb} via transferring the product structure on a bar complex for Hochschild cohomology of a curved algebra. The advance of our work from previous ones is that our product structure $\cup_{kos}$ is more directly described in nonisolated cases. 


\section{An example in mirror symmetry}\label{sec:example}
Let $\Z/2=\{1,\rho\}$ act on $\C[x_1,x_2,x_3]$ by 
\[ \rho\cdot x_1=-x_1,\; \rho\cdot x_2=-x_2,\; \rho\cdot x_3=x_3\]
and let $W=x_1^2+x_2^2+x_1 x_2 x_3$. We want to remark that $W$ is a mirror of the orbifold sphere $\PP^1_{2,2,\infty}$ (see \cite{CCJ}), which is the $\Z/2$-quotient of $T^* S^1$ (Figure \ref{fig:z2cover}). 
Therefore, $\Z/2$-equivariant theory of $W$ is expected to be deeply related to the symplectic geometry of $T^* S^1$. In this section let us compute the orbifold Koszul algebra structure of $(W,\Z/2)$ and compare it with the symplectic cohomology of $T^* S^1$. Recall that $SH^\bullet(T^* S^1)\cong \C[x,x^{-1},\theta]$ where $\theta$ is a degree 1 element with $\theta^2=0$ (see \cite{Pas_CY} for example).
\begin{figure}
    \centering
    \includegraphics[width=0.9\linewidth]{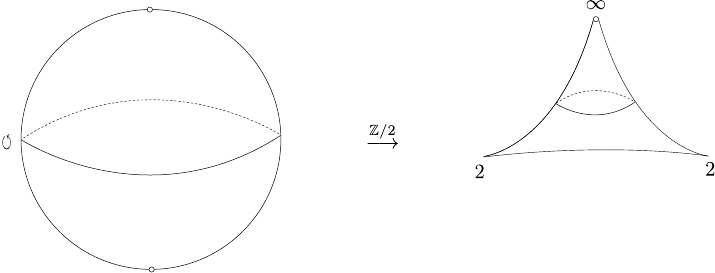}
    \caption{The $\Z/2$-cover of $\PP^1_{2,2,\infty}$ is the 2-punctured sphere, namely $T^* S^1$.}
    \label{fig:z2cover}
\end{figure}

First, the usual Koszul complex $K^\bullet(\partial W)$ is 
\begin{equation}\label{eq:kostrivial}
\begin{split}
\xymatrix{
0 \ar[r] & \C[x_1,x_2,x_3]\theta_1\theta_2\theta_3 \ar[r]^-{d_W} 
& \bigoplus\limits_{1\leq i<j\leq 3}\C[x_1,x_2,x_3]\theta_i\theta_j \ar[r]^-{d_W} 
& \bigoplus\limits_{1\leq i \leq 3}\C[x_1,x_2,x_3]\theta_i } \\
\xymatrix{ \ar[r]^-{d_W} 
& \C[x_1,x_2,x_3] \ar[r] & 0 & & & & & & && }
\end{split}
\end{equation}
where $d_W= (2x_1+x_2 x_3)\partial_1 + (2x_2+x_1 x_3)\partial_2 + x_1 x_2 \partial_3$. Its cohomology is computed as follows (with the aid of Macaulay2 \cite{Macaulay}):
\[ H^i(K^\bullet (\partial W)) \cong 
\begin{cases}
    \frac{\C[x_1,x_2,x_3]}{(2x_1+x_2 x_3,2x_2+x_1 x_3, x_1 x_2)} & {i=0}, \\
    \C[x_3]\cdot \langle 2x_2 \theta_1 - x_2 x_3 \theta_2 +(x_3^2-4)\theta_3\rangle & {i=-1}, \\
    0 & {\textrm{otherwise}},
\end{cases}
\]
and its $\Z/2$-invariant submodule is given by
\[ H^0(K^\bullet (\partial W))^{\Z/2} \cong \C[x_3], \quad H^{-1}(K^\bullet (\partial W))^{\Z/2} = H^{-1}(K^\bullet (\partial W)).\]
To avoid confusion, we will distinguish a cocycle $\alpha$ of \eqref{eq:kostrivial} by $\alpha^1$, meaning that we consider a cocycle in the $1$-sector. Now, since $W^\rho=W(x_1^\rho,x_2^\rho,x_3^\rho)=0$, the $\rho$-twisted Koszul complex $K^\bullet(\partial W^\rho)\cdot \theta_{I_\rho}$ is
\[ 0 \to \C[x_3]\cdot\theta_{1}\theta_{2}\theta_{3} \stackrel{0}{\longrightarrow} \C[x_3]\cdot\theta_{1}\theta_{2} \to 0.\]
Again, we will denote a cocycle $\beta$ of this $\rho$-sector complex by $\beta^\rho$. Observe that $H^*(K^*(W,\Z/2))^{\Z/2}$ is a free $\C[x_3]$-module.

Let us compute $\exp(\eta_\rho)((\theta_1\theta_2)^\rho)$ as follows. First,
\begin{align*}
    \barW^\rho_{0,0}&=W(\rho x_1, \rho x_2, \rho x_3)= x_1^2+x_2^2+x_1 x_2 x_3, &
    &\barW^\rho_{0,1}=W(x_1,\rho x_2,\rho x_3)= x_1^2+x_2^2-x_1 x_2 x_3, \\
    \barW^\rho_{0,2}&=W(x_1,x_2,\rho x_3)=x_1^2+x_2^2+x_1 x_2 x_3,&
    & \barW^\rho_{0,3}=W(x_1,x_2,x_3)=x_1^2+x_2^2+x_1 x_2 x_3, \\
    \barW^\rho_{1,1}&=W(y_1, \rho x_2, \rho x_3)= y_1^2+x_2^2-x_1 x_2 x_3, &
    &\barW^\rho_{1,2}=W(y_1,x_2,\rho x_3)= y_1^2+x_2^2+x_1 x_2 x_3, \\
    \barW^\rho_{1,3}&=W(y_1,x_2, x_3)=y_1^2+x_2^2+x_1 x_2 x_3,&
    & \barW^\rho_{2,2}=W(y_1,y_2,\rho x_3)=y_1^2+y_2^2+y_1 y_2 x_3, \\
    \barW^\rho_{2,3}&=W(y_1, y_2, x_3)= y_1^2+y_2^2+y_1 y_2 x_3, &
    &\barW^\rho_{3,3}=W(y_1,y_2,y_3)= y_1^2+y_2^2+y_1 y_2 y_3,
\end{align*}
and likewise, 
\begin{align*}
    \tildeW^\rho_{0,0}&=W(\rho x_1, \rho x_2, \rho x_3)= x_1^2+x_2^2+x_1 x_2 x_3, &
    &\tildeW^\rho_{0,1}=W(x_1,\rho x_2,\rho x_3)= x_1^2+x_2^2-x_1 x_2 x_3, \\    \tildeW^\rho_{0,2}&=\tildeW^\rho_{0,3}=x_1^2+x_2^2+x_1 x_2 x_3, & & \tildeW^\rho_{1,1}=\tildeW^\rho_{1,2}=\tildeW^\rho_{1,3}=x_2^2,  \\
     \tildeW^\rho_{2,2}&=\tildeW^\rho_{2,3}=\tildeW^\rho_{3,3}=0.& &
\end{align*}
Plugging $\barW^\rho_{j,i}$ and $\tildeW^\rho_{j,i}$ into \eqref{eq:gjifji} and \eqref{eq:gii}, we have
\begin{align*}
    & g^\rho_{11}=1,\quad g^\rho_{12}=x_3,\quad g^\rho_{13}=x_2,\quad g^\rho_{22}=1,\quad g^\rho_{23}=y_1,\quad g^\rho_{33}=0, \\
    & f^\rho_{12}=-\frac{x_3}{2}, \quad f^\rho_{13}=-\frac{x_2}{2},\quad f^\rho_{23}=0.
\end{align*}
By \eqref{etasign} and \eqref{eq:exp} it directly follows that
\[ \exp(\eta_\rho)((\theta_1\theta_2)^\rho)=\theta_1\theta_2+\theta_2\partial_1-x_3\theta_1\partial_1-\theta_1\partial_2+\partial_1\partial_2+\frac{x_3}{2}.\]

Consequently
\begin{align*}
 &   \rho_* \big(\exp(\eta_\rho)((\theta_1\theta_2)^\rho)\big) \cdot \exp(\eta_\rho)((\theta_1\theta_2)^\rho) \\
= & (\theta_1\theta_2+\theta_2\partial_1-x_3\theta_1\partial_1-\theta_1\partial_2+\partial_1\partial_2+\frac{x_3}{2}) \cdot (\theta_1\theta_2+\theta_2\partial_1-x_3\theta_1\partial_1-\theta_1\partial_2+\partial_1\partial_2+\frac{x_3}{2}) \\
=& \frac{x_3^2}{4}-1+ c_1\cdot\partial_1+c_2\cdot\partial_2+c_{12}\cdot\partial_1\partial_2
\end{align*} 
where $c_1,c_2,c_{12}\in S\langle \theta_1,\theta_2,\theta_3\rangle$. By Proposition \ref{prop:discardpartial}, we have 
\begin{equation}\label{eq:squareoftheta12} 
\big(\rho_* \big(\exp(\eta_\rho)((\theta_1\theta_2)^\rho)\big) \cdot \exp(\eta_\rho)((\theta_1\theta_2)^\rho)\big)_{kos} = \big(\frac{x_3^2}{4}-1\big)^1.
\end{equation}
Likewise, we can deduce that
\begin{equation}\label{eq:12times123} 
\big(\rho_*\big(\exp(\eta_\rho)((\theta_1\theta_2)^\rho)\big) \cdot \exp(\eta_\rho)((\theta_1\theta_2\theta_3)^\rho)\big)_{kos}= \Big(\frac{x_2}{2}\theta_1-\frac{x_2 x_3}{4}\theta_2+\big(\frac{x_3^2}{4}-1\big)\theta_3\Big)^1.
\end{equation}
Above multiplications \eqref{eq:squareoftheta12} and \eqref{eq:12times123} on $\rho$-sector are the most nontrivial ones. Besides them, commutativity of $\cup_{kos}$ on $K^\bullet(W,\Z/2)^{\Z/2}$ are readily checked by the recipe (the result on commutativity on general orbifold Koszul algebras will appear elsewhere). The following is now straightforward.
\begin{theorem}
    $H^\bullet(K^\bullet(W,\Z/2))^{\Z/2} \cong SH^\bullet(T^* S^1)$.
\end{theorem}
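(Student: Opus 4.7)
The plan is to exhibit both algebras in terms of an explicit basis and a multiplication table. From the computation above, $H^\bullet(K^\bullet(W,\Z/2))^{\Z/2}$ is free of rank four over $\C[x_3]$ with basis $1^1$, $\gamma^1 := \bigl(2x_2\theta_1 - x_2 x_3 \theta_2 + (x_3^2-4)\theta_3\bigr)^1$, $y := (\theta_1\theta_2)^\rho$, and $\tau := (\theta_1\theta_2\theta_3)^\rho$; the matrix-factorization $\Z/2$-grading puts $1^1, y$ in even and $\gamma^1, \tau$ in odd degree. The target $SH^\bullet(T^*S^1) \cong \C[x,x^{-1},\theta]/(\theta^2)$ has the same shape, with even subalgebra $\C[x,x^{-1}]$ and odd part free of rank one.

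The two nontrivial twisted products in \eqref{eq:squareoftheta12} and \eqref{eq:12times123} give
\[
y \cup_{kos} y \;=\; \tfrac{x_3^2}{4}-1, \qquad y \cup_{kos} \tau \;=\; \tfrac{1}{4}\gamma^1.
\]
All remaining products are forced: graded commutativity of $\cup_{kos}$ on the invariant part, mentioned just after \eqref{eq:12times123}, makes the odd classes $\gamma^1$ and $\tau$ square to zero in characteristic zero; the cross term then satisfies $\gamma^1 \cup_{kos} \tau = 4(y\cup_{kos}\tau)\cup_{kos}\tau = 4y\cup_{kos}\tau^2 = 0$; and the products internal to the $1$-sector reduce to ordinary Koszul multiplication on $H^\bullet(\partial W)$.

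With the table complete, I would define
\[
\Phi : \C[X,X^{-1},\theta]/(\theta^2) \;\longrightarrow\; H^\bullet(K^\bullet(W,\Z/2))^{\Z/2}, \qquad X \mapsto y+\tfrac{x_3}{2}, \quad \theta \mapsto 4\tau.
\]
The identity $(y+\tfrac{x_3}{2})(-y+\tfrac{x_3}{2}) = \tfrac{x_3^2}{4}-y^2 = 1$ forces $\Phi(X^{-1}) = -y + \tfrac{x_3}{2}$ and $X+X^{-1} \mapsto x_3$, so the even subalgebras are identified as $\C[x_3, y]/(y^2 - \tfrac{x_3^2}{4}+1) \cong \C[X,X^{-1}]$. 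On the odd part, $y\cdot(4\tau) = \gamma^1$ shows that $\C[x_3]\tau \oplus \C[x_3]\gamma^1$ is freely generated by $\theta = 4\tau$ over the even subalgebra, and $\theta^2 = 16\tau^2 = 0$ completes the isomorphism.

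The main technical input is the vanishing $\tau^2 = 0$; I would either import this from the promised general graded-commutativity of $\cup_{kos}$ on the invariant part, or, as a self-contained alternative, verify it by unpacking $\exp(\eta_\rho)(\theta_1\theta_2\theta_3)$ via the same recipe used to derive \eqref{eq:squareoftheta12} and applying Proposition~\ref{prop:discardpartial} to discard the contraction-operator terms; the surviving contribution must lie in the image of the Koszul differential on the $1$-sector for parity reasons.
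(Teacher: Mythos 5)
Your proposal is correct and follows essentially the same route as the paper: your generator $X\mapsto y+\tfrac{x_3}{2}$ is exactly the paper's $\lambda_+=(\tfrac{x_3}{2})^1+(\theta_1\theta_2)^\rho$ (with $X^{-1}\mapsto\lambda_-$), your $\theta\mapsto 4\tau$ is a harmless rescaling of the paper's $\gamma\mapsto\tau$, and both arguments rest on the same inputs, namely the two computed products \eqref{eq:squareoftheta12}--\eqref{eq:12times123} together with the relations $\lambda_+\cup_{kos}\lambda_-=1$ and $\tau\cup_{kos}\tau=0$. Your derivation of $\tau^2=0$ from graded commutativity in characteristic zero and your rank count for the odd part are slightly more explicit than the paper's ``readily checked'' remarks, but they do not constitute a different method.
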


\begin{proof}
Recall that $SH^\bullet(T^* S^1) \cong \C[x,x^{-1}][\theta]$ where $\theta^2=0$. Observe that $\C[x,x^{-1}][\theta]$ is isomorphic to a quotient of free commutative algebra:
\[ \C[x,x^{-1}][\theta] \cong \C[\alpha,\beta,\gamma]/(\alpha\beta-1,\gamma^2).\]
Define an algebra homomorphism from the free commutative algebra as
\begin{align*} 
 \mu: \C[\alpha,\beta,\gamma] & \to H^\bullet(K^\bullet(W,\Z/2))^{\Z/2},\\
\alpha &\mapsto \lambda_+:= \big(\frac{x_3}{2}\big)^1+(\theta_1\theta_2)^\rho,\\
 \beta & \mapsto \lambda_-:= \big(\frac{x_3}{2}\big)^1-(\theta_1\theta_2)^\rho, \\
 \gamma & \mapsto \tau:= (\theta_1\theta_2\theta_3)^\rho.
 \end{align*}
$\mu$ is surjective due to
\[ \mu(\alpha+\beta)=(x_3)^1,\quad \mu\big(\frac{\alpha-\beta}{2}\big)= (\theta_1\theta_2)^\rho,
\quad \mu(\gamma)=(\theta_1\theta_2\theta_3)^\rho,\]
\[\quad \mu(\frac{\alpha-\beta}{2}\cdot \gamma)= (\theta_1\theta_2)^\rho \cup_{kos}(\theta_1\theta_2\theta_3)^\rho = \Big(\frac{x_2}{2}\theta_1-\frac{x_2 x_3}{4}\theta_2+\big(\frac{x_3^2}{4}-1\big)\theta_3\Big)^1,\]
meaning that we recover all generators of $H^\bullet(K^\bullet(W,\Z/2))^{\Z/2}$ by $\mu$. We also check that \[\lambda_+ \cup_{kos} \lambda_-=1, \quad \tau\cup_{kos}\tau=0\] completely determine the relation of multiplications between them, hence \[\ker\mu = (\alpha\beta-1,\gamma^2).\qedhere\]
\end{proof}


\bibliographystyle{amsalpha}
\bibliography{geometry}

\end{document}